\let\mathcal\mathscr
\numberwithin{equation}{section}
\newtheorem{theorem}{Theorem}[section]
\newtheorem{lemma}[theorem]{Lemma}
\theoremstyle{definition}
\newtheorem*{ack}{Acknowledgements}
\renewcommand{\phi}{\varphi}
\renewcommand{\rho}{\varrho}
\newcommand{\1}{\mathbf{1}}
\newcommand{\sumstar}{\sideset{}{^*}\sum}
\newcommand{\PP}{\mathbb{P}}
\renewcommand{\AA}{\mathbb{A}}
\newcommand{\FF}{\mathbb{F}}
\newcommand{\ZZ}{\mathbb{Z}}
\newcommand{\NN}{\mathbb{Z}_{>0}}
\newcommand{\QQ}{\mathbb{Q}}
\newcommand{\RR}{\mathbb{R}}
\newcommand{\cA}{\mathcal{A}}
\newcommand{\cP}{\mathcal{P}}
\renewcommand{\leq}{\leqslant}
\renewcommand{\geq}{\geqslant}
\renewcommand{\ge}{\geqslant}
\renewcommand{\bar}{\overline}
\newcommand{\ma}{\mathbf}
\newcommand{\n}{\mathbf{n}}
\newcommand{\x}{\mathbf{x}}
\newcommand{\y}{\mathbf{y}}
\renewcommand{\c}{\mathbf{c}}
\newcommand{\ve}{\varepsilon}
\newcommand{\e}{\mathrm e}
\DeclareMathOperator{\Mod}{mod} 
\renewcommand{\bmod}[1]{\,(\Mod{#1})}
\renewcommand{\=}{\equiv}
\begin{document}

\title[The polynomial sieve]{The polynomial sieve and equal sums\\ of like polynomials}

\author{T.D.\ Browning}
\address{School of Mathematics\\
University of Bristol\\ Bristol\\ BS8 1TW\\ United Kingdom}
\email{t.d.browning@bristol.ac.uk}

\date{\today}

\thanks{2010  {\em Mathematics Subject Classification.} 11N35 (11N36, 11P05)}

\begin{abstract}
A new ``polynomial sieve'' 
is presented and used to 
show that almost all integers have at most one representation as a sum of two values of a given polynomial of   degree at least $3$.  
\end{abstract}	

\dedicatory{\centering{
Dedicated to \'Etienne Fouvry on his sixtieth birthday}}

\maketitle

\section{Introduction}

Suppose that we are given a set $\cA\subset \ZZ^m$. A primary goal 
in sieve theory is to estimate how many elements of $\cA$ have  components belonging to a particular sequence of  integers, such as squares, for example. 
Let  $w:\ZZ^m\rightarrow \RR_{\geq 0}$ be  a non-negative weight function 
 such that 
$$
\sum_{\n\in \ZZ^m}w(\n)<\infty.
$$
Let  $f(x;\y)\in \ZZ[x,\y]$ be a  polynomial, with $\y=(y_1,\dots,y_m)$, which 
we suppose takes the shape
$$
f(x;\y)=c_0(\y)x^d+\dots+c_d(\y),
$$
for polynomials $c_0,\dots,c_d\in \ZZ[\y]$ such that $c_0$ does not vanish identically.
In particular $f(x;\y)$ has degree $d$ with respect to $x$.

We seek an upper bound for the sum
$$
S(\cA)=\sum_{\substack{\n\in \cA\\ \mbox{\scriptsize{$f(x;\n)$ soluble}} }} w(\n),
$$
where for 
$\n\in \cA$  solubility of $f(x;\n)$ means that  there exists $x\in \ZZ$ such that $f(x;\n)=0$.
In order to prevent this condition being vacuous,
it is natural to restrict attention to $\n\in \cA$ for which 
$f(x;\n)$ does not vanish identically, 
Moreover, we will  introduce extra flexibility into our bound for $S(\cA)$ by allowing $w$ to be 
supported away from the zeros of a given auxiliary polynomial.
Our work is inspired by Heath-Brown's square sieve \cite{square}, which  corresponds to the special  case $m=1$ and 
$f(x;y)=x^2-y$.

\begin{theorem}\label{t:1}
Let $\cP$ be a set of primes, with $P=\#\cP$. Let
 $\alpha\in \NN$ and  let $g\in \ZZ[\y]$ be a non-zero polynomial.
For each $p\in \cP$ and $\n\in \ZZ^m$, let
$$
h(\n)=\gcd(c_0(\n),\ldots,c_d(\n))$$
and 
$$
\nu_p(\n)=\#\{x \bmod{p}: f(x;\n)\=0\bmod{p}\}.
$$
Suppose that $w(\n)=0$ if $g(\n)h(\n)=0$ or if
$|\n|\geq \exp(P)$.   
Then we have
$$
S(\cA)
\ll 
\frac{1}{P^2}\sum_{p,q\in \cP}\left|
\sum_{i,j\in \{0,1,2\}} c_{i,j}(\alpha)S_{i,j}(p,q)
\right|,
$$
with 
$$
S_{i,j}(p,q)=\sum_{\substack{\n\in \cA\\ \gcd(pq,g(\n)h(\n))=1}} w(\n) \nu_p(\n)^i\nu_q(\n)^j
$$
and 
$$
c_{i,j}(\alpha)=
\begin{cases}
(\alpha-d)^2, & \mbox{if $(i,j)=(0,0)$,}\\
\alpha+
(\alpha-1)d
-d^2, & \mbox{if $(i,j)= (1,0)$ or $(0,1)$,}\\
(1+d)^2, & \mbox{if $(i,j)=(1,1)$,}\\
-\alpha+d, & \mbox{if $(i,j)= (2,0)$ or $(0,2)$,}\\
-1-d, & \mbox{if $(i,j)= (2,1)$ or $(1,2)$,}\\
1, & \mbox{if $(i,j)=(2,2)$.}
\end{cases}
$$
\end{theorem}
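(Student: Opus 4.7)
The central observation is that the coefficient table $c_{i,j}(\alpha)$ is precisely the list of coefficients of the bivariate product $Q(X) Q(Y)$, where
$$
Q(t) := \alpha - (t-1)(t-d) = -t^2 + (1+d) t + (\alpha - d).
$$
A direct expansion confirms every entry: the constant term is $(\alpha - d)^2$; the $X$- and $Y$-coefficients are $(1+d)(\alpha - d) = \alpha + (\alpha - 1)d - d^2$; the $X^2$- and $Y^2$-coefficients are $d - \alpha$; the $XY$-coefficient is $(1+d)^2$; and so on. Consequently, the identity $\sum_{i,j \in \{0,1,2\}} c_{i,j}(\alpha) X^i Y^j = Q(X) Q(Y)$ combined with an interchange of summation yields
$$
\sum_{p, q \in \cP} \sum_{i,j \in \{0,1,2\}} c_{i,j}(\alpha) S_{i,j}(p, q) = \sum_{\n \in \cA} w(\n) T(\n)^2,
$$
where
$$
T(\n) := \sum_{\substack{p \in \cP \\ p \nmid g(\n) h(\n)}} Q(\nu_p(\n)).
$$
It therefore suffices to prove the pointwise inequality $\1\{f(x; \n) \text{ is soluble}\} \ll T(\n)^2/P^2$ on the support of $w$, since the theorem then follows by summing against $w(\n)$ and passing to absolute values inside the outer $(p,q)$-sum.

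To derive this inequality, suppose $f(x_0; \n) = 0$ for some $x_0 \in \ZZ$. Reducing $x_0$ modulo any prime $p$ produces a root of $f(x; \n)$ modulo $p$, so $\nu_p(\n) \geq 1$. Conversely, whenever $p \nmid h(\n)$, at least one of $c_0(\n), \ldots, c_d(\n)$ is a unit modulo $p$, so $f(x; \n)$ reduces modulo $p$ to a nonzero polynomial of degree at most $d$, forcing $\nu_p(\n) \leq d$. Hence $\nu_p(\n) \in \{1, \ldots, d\}$ for every $p \in \cP$ with $p \nmid g(\n) h(\n)$, and since
$$
Q(t) - \alpha = -(t-1)(t-d) \geq 0 \quad \text{on } [1,d],
$$
we conclude that $Q(\nu_p(\n)) \geq \alpha$ for each such $p$.

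The remaining ingredient is to verify that the ``bad'' primes $p \in \cP$ dividing $g(\n) h(\n)$ are few. The support condition $|\n| < \exp(P)$, combined with the polynomial dependence of $g$ and $h$ on $\n$, yields $|g(\n) h(\n)| \leq \exp(O(P))$, whose number of distinct prime divisors is $O(P/\log P) = o(P)$. Therefore $T(\n) \geq \alpha(P - o(P)) \gg P$ for $P$ sufficiently large (with implied constant depending on $\alpha$ and $d$), and squaring gives the required pointwise bound. The only creative step in this argument is the identification of $Q$: one needs a quadratic (so that the squared sum expands into the $3 \times 3$ table) which is bounded below by a positive constant on all of $\{1, \ldots, d\}$, and this essentially forces the choice $Q(t) = \alpha - (t-1)(t-d)$. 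Everything else is routine bookkeeping, although one should expect the ``bad prime'' estimate to be the point where the hypothesis $|\n| < \exp(P)$ is genuinely used.
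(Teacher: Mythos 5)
Your proposal is correct and follows the same route as the paper's proof: the polynomial $Q(t)=\alpha-(t-1)(t-d)$ that you identify is exactly the expression $\alpha+(\nu_p-1)(d-\nu_p)$ that the paper squares and sums over $p\in\cP$, and the lower bound $T(\n)\gg P$ (via $Q(\nu_p)\geq\alpha\geq1$ for good primes together with the prime-divisor count bound coming from $|\n|<\exp(P)$) and the upper bound by expanding the square are the same two halves of the argument. The only cosmetic difference is that you package the paper's two-sided comparison of $\Sigma$ as a pointwise bound on $\1\{f(x;\n)\text{ soluble}\}$; the substance is identical.
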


This result will be established in \S \ref{s:sieve}.
The implied constant  is allowed to depend on the polynomials 
 $f\in \ZZ[x,\y]$  and $g\in\ZZ[\y]$.

The parameter $\alpha\geq 1$ in Theorem \ref{t:1} should be thought of as bounded absolutely in terms of $d$ and $m$. 
Our upper bound for  $S(\cA)$ leads us to study the sums 
$S_{i,j}(p,q)$ for suitable primes $p$ and $q$.
In favourable circumstances it will be possible to get an asymptotic formula for each of these sums, with appropriate main terms $M_{i,j}(p,q)$. The idea would then be  to choose $\alpha\geq 1$ in such a way that the sum 
$\sum_{i,j} c_{i,j}(\alpha) M_{i,j}(p,q)$ vanishes.

Theorem \ref{t:1} is a generalisation of the square sieve of Heath-Brown \cite{square}. To see this we take $m=1$, $f(x;y)=x^2-y$ and $g(y)=2y$ in our result. Then $d=2$, $h(n)=1$ and $\nu_p(n)=1+(\frac{n}{p})$ if $p>2$.
A direct calculation shows that 
\begin{align*}
\sum_{i,j\in \{0,1,2\}} c_{i,j}(\alpha)&\nu_p(n)^i\nu_q(n)^j\\
&=
(\alpha-1)^2+(\alpha-1)\left\{\left(\frac{n}{p}\right)+ \left(\frac{n}{q}\right)\right\} +\left(\frac{n}{pq}\right),
\end{align*}
if $\gcd(pq,2n)=1$.
We are  led to take $\alpha=1$ in Theorem \ref{t:1}. 
Then, if  $p=q$ is an odd prime in $\cP$,  we  deduce that 
$$
\sum_{i,j}c_{i,j}(1)S_{i,j}(p,q)\leq \sum_{n\in \cA}w(n).
$$ It therefore follows that 
\begin{align*}
S(\cA)\ll \frac{1}{P}\sum_{n\in \cA}w(n) + \frac{1}{P^2} 
\sum_{\substack{p\neq q\in \cP}}  \left|
\sum_{n\in \cA} w(n) \left(\frac{n}{pq}\right)
\right|,
\end{align*}
which recovers \cite[Thm.~1]{square} exactly.
In a similar fashion, by taking 
$m=1$, $f(x;y)=x^d-y$  and $g(y)=dy$, it is possible to deduce the power sieve of Munshi
\cite[Lemma~2.1]{munshi} from Theorem \ref{t:1}.

\medskip

We will illustrate  Theorem \ref{t:1} by investigating the numbers that can be represented as the sum of two values of a given polynomial.  
Let $f\in  \ZZ[x]$ be a polynomial of degree $d\geq 3$ with positive leading coefficient. Consider 
the arithmetic function
$$
r_f(n)=\#\{(y,z)\in \NN^2:  n=f(y)+f(z)\}.
$$
The 
average behaviour of $r_f(n)$ is easily understood with recourse to the geometry of numbers, with the outcome that 
there is a constant $c_f>0$ such that 
\begin{equation}\label{r(n)}
\sum_{n\leq N}r_f(n)\sim c_fN^{2/d}, \qquad 
(N\rightarrow \infty).
\end{equation}
The following result provides an estimate for its second moment.

\begin{theorem}\label{t:2}
We have 
$$
\sum_{n\leq N} r_f(n)^2\sim 2c_f N^{2/d}, \qquad 
(N\rightarrow \infty).
$$
There are asymptotically $\frac{1}{2}c_fN^{2/d}$   
integers $n\leq N$ for which $r_f(n)\neq 0$, and almost all
of these have essentially just one representation.
\end{theorem}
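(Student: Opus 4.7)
The plan is to establish the second-moment asymptotic $\sum_{n\leq N}r_f(n)^2\sim 2c_f N^{2/d}$, from which the remaining assertions follow by an elementary argument. Indeed, Cauchy--Schwarz combined with \eqref{r(n)} gives $\#\{n\leq N:r_f(n)\neq 0\}\geq \tfrac{c_f}{2}N^{2/d}(1+o(1))$, while the symmetry $(y,z)\leftrightarrow(z,y)$ forces $r_f(n)\geq 2$ whenever $n$ admits a representation with $y\neq z$ --- the diagonal case $y=z$ accounting for only $O(N^{1/d})$ integers. Combined with the identity $\sum_n r_f(n)(r_f(n)-2)=\sum_n r_f(n)^2-2\sum_n r_f(n)=o(N^{2/d})$, this forces $r_f(n)=2$ for almost all representable $n$, giving both the matching upper bound on $\#\{n:r_f(n)\neq 0\}$ and the essential-uniqueness assertion.

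Writing $\sum_{n\leq N}r_f(n)^2=\#\{(y_1,z_1,y_2,z_2)\in\NN^4: f(y_1)+f(z_1)=f(y_2)+f(z_2)\leq N\}$ and splitting the count according to whether $\{y_1,z_1\}=\{y_2,z_2\}$ as multisets, the diagonal contribution is $2\sum_n r_f(n)+O(N^{1/d})\sim 2c_f N^{2/d}$ by \eqref{r(n)}, so it suffices to show that the off-diagonal count $T^{\circ}(N)$ is $o(N^{2/d})$. I would regard $z_2$ as a root of the polynomial $F(x;\n)=f(x)+f(y_2)-f(y_1)-f(z_1)$ in $\n=(y_1,z_1,y_2)\in \NN^3$, and apply Theorem \ref{t:1} with $m=3$, auxiliary polynomial $g(\n)=(y_1-y_2)(z_1-y_2)$ (which vanishes exactly on the trivial branches $y_2\in\{y_1,z_1\}$ producing diagonal quadruples), smooth weight $w$ supported on the box $[1,CN^{1/d}]^3$, and $\cP\subset (Q,2Q]$ a set of primes with $Q$ to be chosen.

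The coefficients of Theorem \ref{t:1} factor as $c_{i,j}(\alpha)=a_i(\alpha)a_j(\alpha)$ with $a_0(\alpha)=\alpha-d$, $a_1(\alpha)=1+d$ and $a_2(\alpha)=-1$, so the inner sum reads $\sum_\n w(\n)L_\alpha(\nu_p(\n))L_\alpha(\nu_q(\n))$ with $L_\alpha(t)=(\alpha-d)+(1+d)t-t^2$. For $p\neq q$ with $pq\ll N^{1/d}$, Poisson summation reduces the sum to the main term $T\cdot \mathbb{E}_p[L_\alpha]\cdot \mathbb{E}_q[L_\alpha]$, where $\mathbb{E}_p$ denotes expectation against uniform $\n\bmod p$. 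Writing $c(\n)=f(y_1)+f(z_1)-f(y_2)$, a short Fourier computation gives $\mathbb{E}_p[\nu_p(c(\n))]=1+O(p^{-1})$, while the Weil bound applied to the affine plane curve $(f(X)-f(Y))/(X-Y)=0$ --- of degree $d-1$, and absolutely irreducible for generic $f$ --- yields $\mathbb{E}_p[\nu_p(c(\n))^2]=2+O(p^{-1/2})$. Hence $\mathbb{E}_p[L_\alpha(\nu_p)]=\alpha-1+O(p^{-1/2})$, which vanishes to leading order precisely when $\alpha=1$; this is my choice.

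The main obstacle is the quantitative execution. With $\alpha=1$ each inner sum contributes $O(T/\sqrt{pq})$ after cancellation of the leading term, and summing over $\cP$ yields a sieve saving of order $Q$ against the trivial bound $T\sim N^{3/d}$. A naive implementation --- in which Poisson summation is valid only for $pq\ll N^{1/d}$, forcing $Q\ll N^{1/(2d)}$ --- gives only $T^{\circ}(N)\ll N^{5/(2d)}$, which exceeds $N^{2/d}$ for every $d\geq 3$. Closing this gap requires pushing the equidistribution analysis beyond the naive Poisson range (for instance via Deligne-style estimates for complete exponential sums in several variables, or a bilinear/iterated sieve argument exploiting the fibration $\n\mapsto c(\n)$), together with a separate treatment of those primes for which $(f(X)-f(Y))/(X-Y)$ fails to be absolutely irreducible mod $p$; the latter set can be absorbed into the zero locus of $g$ by enlarging it suitably. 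Once these refinements are in place, the bound $T^{\circ}(N)=o(N^{2/d})$ follows, and with it the theorem.
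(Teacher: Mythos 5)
Your deduction of the second and third assertions from the second-moment asymptotic is correct and essentially matches the paper's Cauchy--Schwarz argument; and your reduction of the first assertion to showing that the off-diagonal count $T^{\circ}(N)$ is $o(N^{2/d})$ --- equivalently $E_f(B)=o(B^2)$ in the paper's notation --- is exactly the paper's starting point.

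The gap lies in your proposed proof of $T^{\circ}(N)=o(N^{2/d})$, and you have already diagnosed it yourself: applying Theorem~\ref{t:1} directly with $m=3$ and $\n=(y_1,z_1,y_2)$ delivers only $T^{\circ}(N)\ll N^{5/(2d)}$, which exceeds $N^{2/d}$ for every $d\geq 3$, because the three-variable Poisson step forces $pq\ll N^{1/d}$ and the resulting sieve saving of order $Q\ll N^{1/(2d)}$ cannot pay for the cost of completing the sum modulo $pq$. The remedies you sketch (Deligne bounds in several variables, a bilinear or iterated sieve exploiting $\n\mapsto c(\n)$, absorbing bad primes into an enlarged $g$) are not carried out, and it is not clear they can close the gap in this $m=3$ formulation. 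The paper's actual route is structurally different: for $d=3$ and $d\geq 5$ it simply cites existing bounds of the shape $E_f(B)\ll B^{2-\delta}$ (Wooley; Browning--Heath-Brown), while for $d=4$ (Theorem~\ref{t:3}) it first performs the substitutions $(u_i,v_i)$ and extracts the GCD parameters $h_1,h_2$, splits into a regime handled by Bombieri--Pila and a complementary regime handled by the sieve, and in the latter regime applies Theorem~\ref{t:1} with $m=2$ only, sieving over pairs $(r,s)$ with $h_1,h_2,\rho$ frozen so that the emergent exponential sums are over curves and surfaces amenable to Weil and Deligne. As written, your argument does not establish the first assertion of Theorem~\ref{t:2}.
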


In fact this result may be further quantified in the following manner. 
For $B\geq 1$, let 
$E_f(B)$ denote the number 
of positive integers $y_1,y_2,y_3,y_4\leq B$ such that 
\begin{equation}\label{eq:Xs}
f(y_1)+f(y_2)=f(y_{3})+f(y_{4}),
\end{equation}
with  $\{y_1,y_2\}\neq \{y_3,y_4\}$. 
The sum in Theorem \ref{t:2}
counts solutions of \eqref{eq:Xs} in positive integers 
$y_1,\dots,y_4$ with 
$f(y_{1})+f(y_{2})\leq N$.
Any solution in which
$y_{3},y_{4}$ are not a permutation of $y_{1},y_{2}$ will
be counted by $E_{f}(B)$, for $B$ of order $N^{1/d}$.
Amongst the trivial solutions,  
there will be $O(N^{1/d})$ in which $y_{1}=y_{2}$, 
whence
\[\sum_{n\leq N}
r_f(n)^{2}=2\sum_{n\leq N}r_f(n)+O(N^{1/d}+E_f(cN^{1/d})),
\]
for an appropriate constant $c>0$.   
The first part of Theorem \ref{t:2} will therefore follow from \eqref{r(n)}, if we are able to show that $E_f(B)=o(B^2)$.
The second part is standard (see  the deduction of Theorem 2 from Theorem 1 in \cite{hooley-cubes}, for example, 
which deals with the  case $f(x)=x^3$).

Assuming that $d\geq 3$, we would like to show that  
there exists  $\delta>0$ such that 
\begin{equation}\label{eq:E}
E_f(B)=O_{f}(B^{2-\delta}),
\end{equation}
which clearly suffices for the first part of 
Theorem \ref{t:2}.  It is in the special case $f(x)=x^d$ that this
quantity  has received the most attention. Although there have been  subsequent refinements by many authors, it follows from work of Hooley \cite{hoo1, hoo2} that one can take any $\delta<1/3$ in \eqref{eq:E} when $f(x)=x^d$.
For general polynomials $f\in \ZZ[x]$ of degree $d\geq 3$, progress has not been so fluid. 
For $d=3$,  Wooley \cite{wooley} has shown that any $\delta<1/3$ is admissible in \eqref{eq:E}. For $d\geq 7$, previous work of the author \cite[Thm.~1]{me} shows that any 
$
\delta<5/6-2/\sqrt{7}=0.077\dots
$
is admissible.  
This was extended in joint work of the author with Heath-Brown \cite[Cor.~3]{smoothI}, where for $d\geq 5$ any $\delta<3/4-\sqrt{5}/3=0.004\dots$ is shown to be admissible
in \eqref{eq:E}.
It therefore remains to deal with the case $d=4$.

\begin{theorem}\label{t:3}
Let $\ve>0$ and let 
$f \in \ZZ[x]$ be a non-zero quartic polynomial.  Then we have 
$
E_f(B)
\ll_{\ve,f} B^{2-1/6+\ve}.
$
\end{theorem}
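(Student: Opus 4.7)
The plan is to apply Theorem~\ref{t:1} with $m=3$ and $\n=(y_{1},y_{2},y_{3})$, taking $\cA=[1,B]^{3}\cap\ZZ^{3}$ with a smooth weight $w$ supported on this box, the polynomial
$$F(x;\n)=f(y_{1})+f(y_{2})-f(y_{3})-f(x),$$
and the auxiliary polynomial $g(\y)=(y_{1}-y_{3})(y_{2}-y_{3})$ to exclude the ``trivial'' configurations where $y_{3}\in\{y_{1},y_{2}\}$. For $\n$ in the support of $w$, any integer $x$ with $F(x;\n)=0$ yields a non-trivial quadruple counted by $E_{f}(B)$, and since $\deg_{x}F=d=4$ each such $\n$ produces at most $4$ values of $x$. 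A short standard argument handles the stray 4-tuples with $y_{3}\in\{y_{1},y_{2}\}$ that are nonetheless non-trivial (since $f$ is eventually monotonic on $\NN$), and one deduces $E_{f}(B)\ll S(\cA)+B^{1+\ve}$.

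Choose $\alpha=1$. With $d=4$, the coefficient polynomial factorises pleasingly as
$$\sum_{i,j\in\{0,1,2\}}c_{i,j}(1)\,X^{i}Y^{j}=(X^{2}-5X+3)(Y^{2}-5Y+3),$$
and for a generic quartic $f$ one computes $\EE[\nu_{p}]=1+O(1/p)$ and $\EE[\nu_{p}^{2}]=2+O(1/\sqrt{p})$ (the leading constant $2$ coming from the diagonal $y_{1}=y_{2}$ plus the absolutely irreducible cubic curve $(f(y_{1})-f(y_{2}))/(y_{1}-y_{2})=0$), giving $\EE[3-5\nu_{p}+\nu_{p}^{2}]=O(1/\sqrt{p})$. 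By Chinese remainder independence of $\nu_{p}$ and $\nu_{q}$, this becomes $O(1/(pq))$ after multiplication across the two primes, and summing over $p,q\in\cP$ yields an aggregated ``expected-value'' contribution of order $B^{3}/P^{2}$ to $S(\cA)$.

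The technical core is the bound on the discrepancy between each $S_{i,j}(p,q)$ and its expected value. Poisson summation on the smooth weight $w$, together with the Chinese remainder theorem, reduces this to estimating additive character sums
$$T_{p}(\k)=\sum_{\n\bmod p}\nu_{p}(\n)^{i}\,e_{p}(\k\cdot\n)\qquad(\k\not\equiv\0\bmod p).$$
Because $F(x;\n)$ is a sum of one-variable quartics in the $y_{j}$, detecting the condition $F(x;\n)\equiv 0\bmod p$ through orthogonality in an auxiliary variable $a\bmod p$ and performing the $\n$-sum first factorises $T_{p}(\k)$ into a sum over $a$ of products of four classical Weil sums $S(a,k)=\sum_{y\bmod p}e_{p}(af(y)+ky)$, each of size $O(\sqrt{p})$. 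Trivially summing absolute values over $a$ yields only $T_{p}(\k)\ll p^{2}$; the sharpening to roughly $p^{3/2}$ that I expect to be required comes from exhibiting square-root cancellation in the external $a$-sum, via the Deligne/Katz theory of $\ell$-adic Fourier transforms applied to the convolution of Weil sheaves associated with the quartic $f$.

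The main obstacle is securing this refined character-sum bound uniformly in $\k$, while identifying and separately treating the ``degenerate'' frequencies for which the sheaf becomes geometrically constant and the improvement is lost. A second layer of bookkeeping arises because Fourier frequencies $\k$ may be trivial modulo $p$, modulo $q$, both, or neither, each contributing through a different product structure under CRT. Once one has a discrepancy bound of the form $P^{c}+B^{3}/P^{c'}$ for appropriate exponents $c,c'>0$, an optimal choice of $P$ as an explicit power of $B$ balances this against the main-term error $B^{3}/P^{2}$ to produce the final savings $B^{1/6}$ over the trivial count, yielding $E_{f}(B)\ll_{\ve,f}B^{11/6+\ve}$.
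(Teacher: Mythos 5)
Your proposal is a genuinely different route from the paper's: you apply the polynomial sieve directly in three variables with $\cA=[1,B]^3$ and $d=4$, whereas the paper first performs the substitutions $u_1=y_1-y_3$, $u_2=y_1+y_3$, etc., draws out the gcds $h_1=\gcd(u_1,v_1)$, $h_2=\gcd(u_2,v_1/h_1)$, and only then applies Theorem~\ref{t:1} in \emph{two} variables $(r,s)$ with $d=3$, restricted to the residue class $F(r,s)\equiv 0\bmod{\rho}$. That factorisation step is not merely cosmetic; it is what makes the error analysis tractable, because the resulting exponential sums live over surfaces and curves (where Deligne/Weil give errors $O(p)$ and $O(p^{1/2})$) rather than over threefolds, and because the modulus constraint $\rho\mid F(r,s)$ shrinks $\cA$ at the outset.

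Two concrete gaps. First, your main-term estimate is not justified. With $\alpha=1$ the coefficient polynomial indeed factors as $(X^2-5X+3)(Y^2-5Y+3)$ and the expectations $\EE[\nu_p]\approx1$, $\EE[\nu_p^2]\approx2$ give formal vanishing, but what you actually need to bound is the \emph{error} in $\sum_{\n\bmod p}(\nu_p^2-5\nu_p+3)$. Opening $\nu_p^2$ produces a threefold $V=V_1\cup V_2$ in $\AA^5$ with $V_1$ the diagonal $x_1=x_2$ and $V_2$ fibred over the cubic curve $\{(f(x_1)-f(x_2))/(x_1-x_2)=0\}$. The point count of that curve already carries a Frobenius-trace error of size $O(\sqrt{p})$, which propagates to an error $O(p^{5/2})$ in $\#V_2(\FF_p)$, and this does not cancel against the other terms. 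The resulting main-term contribution to $S(\cA)$ is therefore $\approx B^3/Q$ (not $B^3/P^2\approx B^3/Q^2$), and since Theorem~\ref{t:1} carries an absolute value inside the $p,q$-sum you cannot hope to average these errors away. Against an error term of order $Q^3$ the optimisation then gives $B^{9/4}$, worse than trivial. The paper avoids this precisely because its varieties are two-dimensional: Lemma~\ref{lem:sigma-final} gives $\Sigma_t(p;0,0)=\max\{1,t\}p^2+O(p)$ with an $O(p)$ (not $O(p^{3/2})$) error, so the combination $\sum_i a_i\Sigma_i(p;0,0)$ is genuinely $O(p)$. Second, even the error-term estimate $T_p(\k)\ll p^{3/2}$ for nonzero $\k$ is asserted rather than proved. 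You correctly identify that it hinges on square-root cancellation in the $a$-sum over a product of four Weil sums, which needs the Katz machinery, plus a classification and separate treatment of the degenerate frequencies where the relevant sheaf becomes geometrically constant. That is the entire technical content of the problem, and it is left open. The paper's route reduces the analogous sums to ones that can be handled by Lemma~\ref{genhoo} (Hooley's method of moments) together with Lemma~\ref{lem:weil} for curves and Lemma~\ref{lem:deligne} for surfaces, which is exactly the point of the factorisation.
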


Our proof of Theorem \ref{t:3} will follow the strategy
of Hooley \cite{hoo1, hoo2}  for the case
 $f(x)=x^d$, except that we invoke Theorem \ref{t:1} rather than the generalised Selberg sieve adopted by Hooley. 
While this doesn't afford stronger results it does result in a more straightforward exposition. 
The lack of homogeneity that comes from  treating general polynomials $f(x)$ leads to several additional complications when  estimating the  emergent exponential sums. This ultimately leads to a weaker exponent
 in Theorem \ref{t:3}, compared with Hooley's exponent $5/3+\ve$ when $f(x)=x^4$.
However, in this special case, our argument can easily be modified to  recover this  exponent.

\begin{ack}
The author is indebted to Roger Heath-Brown for 
discussions at the research programme 
``Rational and integral points on higher-dimensional varieties'', at MSRI in 2006,
which led to Theorem \ref{t:1} taking shape. His contribution to the resulting paper is gratefully acknowledged. 
While working on this paper the  author was 
supported by  ERC grant \texttt{306457}.
\end{ack}

\section{Proof of Theorem \ref{t:1}}\label{s:sieve}

Our argument is a generalisation of the proof of \cite[Thm.~1]{square}. 
It will be convenient to write $\nu_p=\nu_p(\n)$ in what follows, for each $p\in \cP$. 
We begin by considering the expression
$$
\Sigma=\sum_{\n\in \cA} w(\n)\left(	 
\sum_{\substack{p\in \cP\\ p\nmid g(\n)h(\n)}}  \left\{ \alpha+(\nu_p-1)(d-\nu_p)\right\}
\right)^2.
$$
Each $\n$ is clearly counted with non-negative weight.  Suppose now that $\n\in \cA$ is such that $f(x;\n)$ is soluble and 
$g(\n)h(\n)\neq 0$.
 Then 
$
1\leq \nu_p\leq d
$ 
for every $p\in \cP$ such that $p\nmid h(\n).$
Hence it follows that  
$$
\alpha+(\nu_p-1)(d-\nu_p)\geq \alpha	\geq 1
$$
in the summand, 
whence
$$
\sum_{\substack{p\in \cP\\ p\nmid g(\n)h(\n)}} \left\{ \alpha+(\nu_p-1)(d-\nu_p)\right\}\geq 
\sum_{\substack{p\in \cP\\ p\nmid g(\n)h(\n)}}
1\geq P - \sum_{p\mid g(\n)h(\n)}1,
$$
if $f(x;\n)$ is soluble.
But 
$$
\sum_{p\mid N}1\ll \frac{\log N}{\log\log 3N},
$$ 
for any $N\in \NN$.
It follows from our assumptions on the support of $w$ that 
 $\Sigma\gg P^2 S(\cA)$, with an implied constant that depends 
on the polynomials  $f\in \ZZ[x,\y]$  and $g\in\ZZ[\y]$.

A companion estimate for $\Sigma$ is achieved by  expanding the square, giving the upper bound
$$
\sum_{p,q\in \cP} \left|\sum_{\substack{\n\in \cA\\ \gcd(pq,h(\n)g(\n))=1}} 
\hspace{-0.6cm}
w(\n)\left\{
\alpha+(\nu_p-1)(d-\nu_p)\right\}
\left\{
\alpha+(\nu_q-1)(d-\nu_q)\right\}\right|.
$$
Multiplying out the summand and then comparing this  with our lower bound for $\Sigma$, we easily arrive at the statement of Theorem \ref{t:1}.

\section{Proof of Theorem \ref{t:3} --- preliminaries}

Throughout the proof of Theorem \ref{t:3} we will allow all implied constants to depend in any way upon $f$. Any further dependencies will be indicated explicitly by appropriate subscripts.
Suppose that  $f(x)=a_0x^4+\dots+a_4$ for $a_0,\dots,a_4\in \ZZ$ and $a_0>0$.   Note that 
$$
4^4 a_0^3f(x)=(4a_0x+a_1)^4+b_2(4a_0x+a_1)^2+b_3(4a_0x+a_1)+b_4,
$$
for $b_2,b_3,b_4\in \QQ$ depending on $a_0,\dots,a_4$.
After a possible change of variables it therefore  suffices to establish Theorem \ref{t:3} for the monic polynomial
$$
f(x)=x^4+ax^2+bx,
$$
for given $a,b\in \ZZ$. Furthermore, we
may henceforth assume that $(a,b)\neq (0,0)$, since otherwise Theorem \ref{t:3} is a consequence of  work of Greaves \cite{greaves}, which shows that Theorem \ref{t:3} holds with exponent $2-\frac{1}{4}+\ve$.

In any given point $\y=(y_1,\dots,y_4)$ counted by $E_f(B)$ we may assume without loss of generality that $\max_i y_i=y_1$ and $y_3\geq y_4$.   It follows that 
$y_1>y_3\geq y_4>y_2\geq 0$.
Our starting point will be the factorisation properties of the equivalent equation
\begin{align*}
f(y_1)-f(y_3)&=f(y_4)-f(y_2).
\end{align*}
Through the substitutions
\begin{align*}
u_1=y_1-y_3, &\quad v_1=y_4-y_2, \\
u_2=y_1+y_3, &\quad v_2=y_4+y_2,
\end{align*}
this  equation transforms into
\begin{equation}\label{eq:tom'}
u_1(u_2^3+u_1^2u_2+2au_2+2b)=
v_1(v_2^3+v_1^2v_2+2av_2+2b).
\end{equation}
We observe that 
$u_1,u_2,v_1,v_2$ are  positive integers of size at most $2B$.
Moreover, 
$u_2\neq v_2$ since otherwise we would 
have $y_4+y_2-y_3=y_1>y_3$, from which it would follow that 
$2y_3>y_4+y_2>2y_3$, which is impossible.
We may further assume that $u_1\neq v_1$, since the remaining contribution is $O(1)$. Indeed, if $u_1=v_1$ then 
 our equation becomes
$$
u_2^2+u_2v_2+v_2^2+u_1^2=-2a,
$$
since $u_2\neq v_2$. 
This has $O(1)$ solutions in positive integers $u_1,u_2,v_2$.

We will analyse the Diophantine equation \eqref{eq:tom'} by drawing out common factors between $u_1$ and $v_1$. Given the extra symmetry inherent when $b=0$, we will also need to draw out common factors between $u_2$ and $v_1$. Let us write
$$
h_1=\gcd(u_1,v_1), \quad h_2=\gcd(u_2,v_1/h_1).
$$
We then make the change of variables 
\begin{align*}
(r,s)=(u_1/h_1,u_2/h_2), \quad (\rho,\sigma)=(v_1/(h_1h_2),v_2),
\end{align*}
with $\gcd(r,h_2\rho)=\gcd(s,\rho)=1$. 
Moreover, since $u_i\neq v_i$ for $i=1,2$ we may assume that 
$r\neq h_2\rho$ and $h_2s\neq \sigma$ in any solution.  
These variables satisfy the new equation
$$
r(h_2^3s^3+h_1^2h_2r^2s+2ah_2s+2b)=
h_2\rho(\sigma^3+h_1^2h_2^2\rho^2\sigma+2a\sigma+2b).
$$
In particular, $h_2\mid 2b$ since $\gcd(r,h_2)=1$.  Let us write 
$2b=h_2c$, for $c\in \ZZ$. Then we have 
\begin{equation}\label{eq:tom}
r(h_2^2s^3+h_1^2r^2s+2as+c)=
\rho(\sigma^3+h_1^2h_2^2\rho^2\sigma+2a\sigma+h_2c).
\end{equation}
Here $h_1h_2\leq 2B$ and  $r,s,\rho,\sigma$ 
are positive integers 
satisfying
$$
\gcd(r,h_2\rho)=\gcd(s,\rho)=1 \quad \mbox{and} \quad
(r-h_2\rho)(h_2s- \sigma)\neq 0,
$$
together with the inequalities
$$
r\leq \frac{2B}{h_1}, \quad 
s\leq \frac{2B}{h_2}, \quad 
\rho\leq \frac{2B}{h_1h_2}, \quad 
\sigma\leq 2B.
$$

Define the 
number
\begin{equation}\label{eq:def-A}
A=h_1^2h_2^2 \rho^2+2a.
\end{equation}
When $|A|$ is small or  $\max\{h_1,h_2\}$ is large, 
 we will use 
work of  Bombieri and Pila \cite{BP} to estimate the corresponding contribution.
In the alternative case, we will ultimately apply Theorem \ref{t:1}.
Let $C\geq 1$ and let $1\leq H\leq 2B$. Let
$N_{1}(B,C;H)$ 
(resp.~ $N_{2}(B,C;H)$) denote the total contribution to $E_f(B)$ from solutions with  
$|A|> C$ and 
$\max\{h_1,h_2\}\leq H$ (resp.~ 
$|A|\leq C$ or $\max\{h_1,h_2\}>H$). 
Then our work so far implies that 
$$
E_f(B)\leq N_1(B,C;H)+N_2(B,C;H) +O(1).
$$
The treatment of the second term is relatively  straightforward.

\begin{lemma}\label{lem:big-h}
Let $\ve>0$. Then 
$$
N_{2}(B,C;H) \ll_\ve C^{1+\ve}B^{4/3+\ve}+H^{-1}B^{7/3+\ve}.
$$
\end{lemma}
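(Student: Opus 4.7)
The strategy is to split $N_2(B,C;H)$ into the two pieces arising from the two conditions in its definition, $|A|\le C$ and $\max\{h_1,h_2\}>H$, and in each case to exploit the structure of \eqref{eq:tom} by fixing four of the six variables $h_1,h_2,r,s,\rho,\sigma$ and applying the Bombieri--Pila theorem \cite{BP} to a cubic curve in the remaining two. The key observation is that equation \eqref{eq:tom} has degree exactly $3$ in each of $r,s,\rho,\sigma$, with nonzero leading coefficients, so that fixing any two of these four variables yields a planar curve of total degree $3$ in the other two; assuming absolute irreducibility, Bombieri--Pila then produces $\ll B^{1/3+\ve}$ integer points in $[1,2B]^2$.

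For the contribution $N_2^{(a)}$ from $|A|\le C$, the defining condition forces $h_1^2 h_2^2 \rho^2$ to lie in an interval of length $O(C)$ centred at $-2a$, and a divisor-type bound shows that the number of positive integer triples $(h_1,h_2,\rho)$ satisfying this is $\ll_{f,\ve} C^{1+\ve}$. For each such triple and each of the at most $2B$ admissible values of $r$, equation \eqref{eq:tom} becomes $P(s)=Q(\sigma)$ with $P,Q$ single-variable cubics, defining the desired cubic curve in $(s,\sigma)$. Combining these bounds gives $N_2^{(a)}\ll C^{1+\ve}B^{4/3+\ve}$.

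For the contribution $N_2^{(b)}$ from $\max\{h_1,h_2\}>H$, treat the two subcases separately. When $h_1>H$, fix $(h_1,h_2,r,\rho)$ and use the same cubic curve in $(s,\sigma)$; the number of admissible quadruples is bounded by
\[
\sum_{h_1>H,\;h_2\ge 1}\frac{2B}{h_1}\cdot\frac{2B}{h_1 h_2}\ll \frac{B^{2+\ve}}{H},
\]
using $\sum_{h_1>H}h_1^{-2}\ll H^{-1}$ and $\sum_{h_2\le 2B}h_2^{-1}\ll B^{\ve}$. When $h_2>H$, instead fix $(h_1,h_2,s,\rho)$; now \eqref{eq:tom} is cubic in $r$ with leading coefficient $h_1^2 s$ and cubic in $\sigma$ with leading coefficient $\rho$, producing another cubic curve. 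The analogous count, now using $\sum_{h_2>H}h_2^{-2}\ll H^{-1}$, is again $\ll B^{2+\ve}/H$. Multiplying by the Bombieri--Pila factor $B^{1/3+\ve}$ in each subcase yields $N_2^{(b)}\ll H^{-1}B^{7/3+\ve}$.

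The main technical obstacle is verifying that the cubic curves are absolutely irreducible, since Bombieri--Pila requires this (or, at worst, decomposition into components all of degree $\ge 2$). The dangerous scenarios occur when the leading binary cubic form of the curve — for instance $h_1^2 s\, r^3-\rho\sigma^3$ in the subcase $h_2>H$ — splits over $\QQ$, which forces $\rho/(h_1^2 s)$ to be a rational cube; for $P(s)=Q(\sigma)$ one invokes the Bilu--Tichy analysis to pin down the sporadic reducible specialisations. In either scenario the exceptional parameter tuples are cut out by a nontrivial algebraic condition and are sparse enough that an elementary direct count absorbs their contribution into the stated bound, while on the generic locus Bombieri--Pila applies to the irreducible components (each of degree at most $3$) with no loss.
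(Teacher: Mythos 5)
Your overall strategy — fix four of the six variables, observe that \eqref{eq:tom} becomes a plane cubic in the remaining two, and apply Bombieri--Pila — is exactly the paper's approach, and your choices of which variables to fix (fixing $r$ for $h_1>H$, fixing $s$ for $h_2>H$, and handling $|A|\leq C$ by bounding the number of triples $(h_1,h_2,\rho)$) match the paper's. The $|A|\leq C$ count of $\ll_\ve C^{1+\ve}$ admissible triples and the $\sum_{h_1>H}h_1^{-2}\ll H^{-1}$ computation are also correct and match.

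However, there is a genuine gap in how you dispose of the reducible specialisations, and the claim that ``Bombieri--Pila applies to the irreducible components (each of degree at most $3$) with no loss'' is actually false. The curve fails to be absolutely irreducible precisely when it contains a rational \emph{line}, and a line contributes $O(B)$ integral points in the box, not $O(B^{1/3+\ve})$; Bombieri--Pila gives nothing for components of degree $1$. Consequently the reducible curves must be handled by a different (trivial) bound of size $O(B/h_2)$ per curve, which is far larger than the Bombieri--Pila saving, and the entire burden of the argument falls on showing that the reducible tuples $(\mathbf{h},\rho,r)$ are genuinely rare. Your assertion that ``the exceptional parameter tuples are cut out by a nontrivial algebraic condition and are sparse enough that an elementary direct count absorbs their contribution'' is precisely the hard part, and it cannot simply be waved through: the paper identifies the reducibility condition explicitly (via substituting a parametric line $(s,\sigma)=(t,\alpha t+\beta)$ and equating coefficients), obtaining $c=0$, $A\neq 0$, and $h_2^2\rho^2A^3=r^2A'^3$, and then reduces this relation modulo $h_1^2$ to get $8a^3(h_2^2\rho^2-r^2)\equiv 0\bmod{h_1^2}$, which — combined with $\gcd(r,h_2\rho)=1$ and $0\neq r\pm h_2\rho\ll B/h_1$ — forces the number of admissible $r$ to be $\ll B/h_1^3$. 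This extra factor of $h_1^{-2}$ is what makes the trivial bound on the line components acceptable. Without an argument of this kind, the reducible case contributes $\gg B^2/H$ on its own, which is not covered by the stated bound. Finally, the invocation of Bilu--Tichy is misplaced here: that machinery concerns reducibility of $P(x)-Q(y)$ for fixed $P,Q$, whereas here the relevant question is for which specialisations of the \emph{parameters} $(\mathbf{h},\rho,r)$ the plane cubic degenerates, and the right tool is the elementary coefficient comparison the paper carries out.
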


\begin{proof}
One way to estimate the number of solutions to \eqref{eq:tom} is to first fix some of the variables, viewing  the resulting equation as something of smaller dimension. 
Let $C_{\ma{h},\rho,r}\subset \AA_\QQ^2$ denote the affine cubic curve which arises when $\ma{h}=(h_1,h_2)$ and $\rho,r$ are fixed. 
Let us put 
$A'=
h_1^2r^2+2a
$, for ease of notation. 
Then we claim that $C_{\ma{h},\rho,r}$ is absolutely irreducible unless 
\begin{equation}\label{eq:reduced}
c=0 \quad \mbox{and} \quad h_2^2\rho^2A^3=r^2A'^3,
\end{equation}
with $A\neq 0$.
To prove this we suppose that 
$C_{\ma{h},\rho,r}$ is not absolutely irreducible. Then
it must contain a line defined over $\bar \QQ.$  We may assume that this line is given parametrically by $(s,\sigma)=(t, \alpha t+\beta)$ for $\alpha, \beta\in \bar \QQ$. 
Making this substitution into \eqref{eq:tom} and equating coefficients of $t$, we deduce that 
$$
\beta=c=0 \quad \mbox{and} \quad rh_2^2=\rho \alpha^3 \quad \mbox{and} \quad 
rA'=\rho A\alpha,
$$
with $A\neq 0$, 
since $h_1h_2\rho(r\pm h_2\rho)\neq 0$.
Eliminating $\alpha$ easily leads to the claim.

Suppose that  $\ma{h}, \rho, r$ do not satisfy \eqref{eq:reduced}. 
It then follows from 
a result of Bombieri and Pila \cite{BP} that 
\begin{equation}\label{eq:BP-app}
\#\{s,\sigma\leq 2B: (s,\sigma)\in C_{\ma{h},\rho,r}(\ZZ)\}
=O_\ve(B^{1/3+\ve}),
\end{equation}
for any $\ve>0$. The implied constant is independent of  $\ma{h},\rho,r$ and depends only on $\ve$.
Alternatively, if  $\ma{h}, \rho, r$ do satisfy \eqref{eq:reduced} then we have the trivial bound 
$O(B/h_2)$ for the number of points in
$C_{\ma{h},\rho,r}(\ZZ)$, which arises from noting that there are at most $3$ choices of $\sigma$ associated to a given choice of  $s$.

We may now handle the contribution from $|A|\leq C$, in which case $h_1h_2\rho\ll C$. 
There are $O_\ve(C^{1+\ve})$  choices for $h_1,h_2,\rho$ satisfying this bound, by the trivial estimate for the divisor function. 
When 
$\ma{h}, \rho, r$ do not satisfy \eqref{eq:reduced} we will apply \eqref{eq:BP-app}. This case therefore gives an overall contribution
$O_\ve (C^{1+\ve}B^{4/3+\ve} )$. Alternatively, when
$\ma{h}, \rho, r$ do satisfy \eqref{eq:reduced} there are at most  $8$  choices for $r$ when $h_1,h_2,\rho$ are fixed. 
This case therefore makes the smaller  overall contribution 
$O_\ve (C^{1+\ve}B)$.

Next, let us consider the contribution from $h_1>H$.
We fix a choice of $\ma{h}, r$ and $\rho$ in \eqref{eq:tom}.
When \eqref{eq:reduced} fails 
we may apply \eqref{eq:BP-app}. This 
leads to the contribution 
$$
\ll_\ve B^{1/3+\ve} \sum_{h_1>H} \sum_{h_2\leq 2B} \frac{B^2}{h_1^2h_2}\ll_\ve H^{-1}B^{7/3+\ve}\log B.
$$
Taking $\log B=O_\ve(B^\ve)$ and redefining the choice of $\ve>0$, this is satisfactory for the lemma.
Alternatively, when \eqref{eq:reduced} is satisfied
we apply the bound $O(B/h_2)$ for the number of $s,\sigma$. 
But then $\rho,r$ are restricted by the equation
$h_2^2\rho^2A^3=r^2A'^3$, which once reduced modulo $h_1^2$ implies that 
$$
8a^3(h_2^2\rho^2-r^2)\equiv 0\bmod{h_1^2}.
$$
We must have $a\neq 0$ since  $c=0$ and we are assuming that $(a,b)\neq (0,0)$.
Let $q=h_1^2/\gcd(h_1^2,8a^3)$.
Then this congruence becomes 
$h_2^2\rho^2\equiv r^2\bmod{q}.$
Write $q'=q/\gcd(q,2)$.
Since $\gcd(r, h_2\rho)=1$ we deduce that 
$r\equiv h_2\rho \bmod{q'}$ or 
$r\equiv -h_2\rho \bmod{q'}$.
In particular we must have $q'\ll B/h_1$, since $0\neq r\pm h_2\rho\ll B/h_1$.
In either case, given $\ma{h}, \rho$ we see that the number of $r$ that can possibly contribute is 
$$
\ll\frac{B}{h_1q'}\ll \frac{B}{h_1^3},
$$
and to each of these is associated at most $8$ choices for $\rho$.
This case therefore leads to the 
overall contribution 
$$
\ll B \sum_{h_1>H} \sum_{h_2\leq 2B} \frac{B^2}{h_1^3h_2}\ll H^{-2}B^{2}\log B,
$$
which is satisfactory.

It remains to consider the contribution from $h_2>H$. This is handled in a completely analagous fashion, 
by first fixing a choice of 
$\ma{h}, s$ and $\rho$ and
considering the 
affine cubic curve $D_{\ma{h},\rho,s}\subset \AA_\QQ^2$.
In this case, on writing 
$A''=h_2^2s^2+2a$, 
one finds that  $D_{\ma{h},\rho,s}$ is absolutely irreducible unless 
$$
c=0 \quad \mbox{and} \quad h_1^2\rho^2A^3=s^2A''^3,
$$
with $A\neq 0$.
When $D_{\ma{h},\rho,s}$ is absolutely irreducible one 
applies the analogue of \eqref{eq:BP-app}. When it fails to be 
 absolutely irreducible one applies the trivial bound $O(B/h_1)$ for the number of points 
in $D_{\ma{h},\rho,s}(\ZZ)$
The remainder of the argument runs just as before. 
This concludes the proof of the lemma.
\end{proof}

The estimation of $N_{1}(B,C;H)$ is much more awkward. The remainder of this paper is dedicated to proving 
the following result. 

\begin{lemma}\label{lem:small-h}
Let $\ve>0$ and assume that $C\gg 1$. Then we have 
$$
N_{1}(B,C;H) \ll_\ve   H^{1/2}B^{3/2+\ve}
+B^{2-1/6+\ve}.
$$
\end{lemma}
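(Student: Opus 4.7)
The plan is to apply Theorem \ref{t:1} to the cubic equation \eqref{eq:tom}, taking $\sigma$ as the sieve variable $x$ and $\n = (r, \rho, s)$ as the parameter vector, for each fixed pair $h_1, h_2 \leq H$. Concretely, \eqref{eq:tom} may be rewritten as $f(\sigma; \n) = 0$ with
\[
f(x; \n) = \rho x^3 + (h_1^2 h_2^2 \rho^3 + 2a\rho) x + \rho h_2 c - r(h_2^2 s^3 + h_1^2 r^2 s + 2as + c),
\]
a cubic in $x$ with $m = d = 3$ in the notation of Theorem \ref{t:1}. The weight $w(\n)$ is to be a smooth indicator of the box $r \leq 2B/h_1$, $\rho \leq 2B/(h_1 h_2)$, $s \leq 2B/h_2$, restricted to the region $|A| > C$; the auxiliary polynomial $g(\n)$ is built from $r \rho s$, the discriminant of $f(\cdot; \n)$, and additional factors enforcing the coprimalities $\gcd(r, h_2 \rho) = \gcd(s, \rho) = 1$. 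Since each admissible $\n$ produces at most three solutions $\sigma \leq 2B$ of \eqref{eq:tom}, this reduces the problem to bounding $\sum_{h_1, h_2 \leq H} S(\cA_{h_1, h_2})$.

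Next I would take $\cP$ to be the set of primes in a dyadic interval $[P, 2P]$, with $P = B^{\theta}$ for an exponent $\theta > 0$ to be optimised, so that $|\cP| \asymp P/\log B$, and choose $\alpha = d = 3$ in Theorem \ref{t:1}. This choice of $\alpha$ makes $c_{i,j}(\alpha) = 0$ whenever $i = 0$ or $j = 0$, and the relevant inner expression simplifies to $\sum_{i,j} c_{i,j}(3) \nu_p^i \nu_q^j = \nu_p(4-\nu_p) \nu_q(4-\nu_q)$. The diagonal $p = q$ is then handled trivially via the pointwise bound $0 \leq \nu_p(4-\nu_p) \leq 4$, producing a contribution of the shape $P^{-1} \sum_\n w(\n) \ll P^{-1} B^3/(h_1^2 h_2^2)$ for each $(h_1, h_2)$.

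For the off-diagonal terms $p \neq q$, I would open up each $\nu_p(\n)^i$ as a sum over residues $\sigma_1, \dots, \sigma_i$ modulo $p$, reducing matters to counting $\n$ in the prescribed box subject to specified cubic congruences modulo $pq$. Poisson summation in $\n$ then separates a main term, which should cancel in the full linear combination of the $S_{i,j}(p,q)$ (mirroring the cancellation exploited in the square sieve), from oscillatory error terms governed by exponential sums attached to $f(\cdot; \n)$ modulo $pq$; these are to be bounded by Weil-type estimates, with the condition $|A| > C$ ensuring that the discriminant of $f(\cdot; \n)$ is non-vanishing modulo $p$ outside a thin set of $\n$ that may be handled separately. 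The main obstacle is the uniform estimation of these exponential sums in all of the parameters $h_1, h_2, \rho, p, q$: the non-homogeneity of $f$ (the coefficients $a, b$ being arbitrary rather than both zero) produces lower-order terms absent from Hooley's homogeneous treatment, which must be tracked carefully. Once these error estimates are in place, summing over $h_1, h_2 \leq H$ and choosing $\theta$ appropriately in each regime yields the bound $H^{1/2} B^{3/2+\ve} + B^{2 - 1/6 + \ve}$ claimed in the lemma.
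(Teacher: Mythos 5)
Your overall architecture---apply Theorem~\ref{t:1} to \eqref{eq:tom} with $\sigma$ as the sieve variable, complete the $\n$-sum by Poisson/orthogonality, and estimate the resulting exponential sums by Weil---is the right one, and is essentially what the paper does (the paper fixes $h_1,h_2,\rho$ and takes $\n=(r,s)$, whereas you absorb $\rho$ into $\n$, but that is a presentational choice). However, there is a genuine error at the heart of the proposal: the choice $\alpha=d=3$ does \emph{not} produce the cancellation of main terms that your argument relies on, and without that cancellation the sieve yields no saving at all.

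To see this, note that for a generic cubic modulo $p$ the quantities $\nu_p$ have, on average over $(r,s)$, first moment $\approx 1$ and second moment $\approx 2$ (this is the content of Lemma~\ref{lem:sigma-final}, $\Sigma_t(p;0,0)=\max\{1,t\}p^2+O(p)$). Hence the sieve weight $\alpha+(\nu_p-1)(d-\nu_p)$ has expectation $\alpha-1$, and the main term of the quantity $\Sigma$ in the proof of Theorem~\ref{t:1} is of size $(\alpha-1)^2P^2\card\cA$; equivalently one computes directly from \eqref{eq:c-3} that
\[
\sum_{i,j\in\{0,1,2\}}c_{i,j}(\alpha)\max\{1,i\}\max\{1,j\}=(\alpha-1)^2.
\]
Thus the only value of $\alpha$ that kills the main term is $\alpha=1$, which is what the paper uses. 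With your choice $\alpha=3$, the factor $(\alpha-1)^2=4$ is nonzero; the off-diagonal $p\neq q$ main terms give a contribution $\gg P^{-2}\cdot P^2\cdot\card\cA=\card\cA$, so the sieve upper bound is of the same size as the trivial bound and the exponent $2-1/6$ is unreachable. The observation that $\alpha=3$ makes $c_{i,j}(3)$ vanish whenever $ij=0$ and the inner expression factor neatly as $\nu_p(4-\nu_p)\nu_q(4-\nu_q)$ is correct but is a red herring: that factor has average $2\cdot2=4\neq0$, which is precisely the surviving main term. Your remark that ``the main term should cancel, mirroring the square sieve'' is exactly where the argument breaks---in the square sieve the analogous choice is also $\alpha=1$. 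Once $\alpha=1$ is used, one does have to handle the terms with $i=0$ or $j=0$ (which are nonzero but elementary, since $\Sigma_0(p;M,N)$ vanishes unless $p\mid(M,N)$), and the rest of your outline then tracks the paper's argument.

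One further, more minor, point: coprimality conditions such as $\gcd(r,h_2\rho)=\gcd(s,\rho)=1$ cannot be ``built into'' the polynomial $g(\n)$ in Theorem~\ref{t:1}; the role of $g$ there is only to let the sieve discard primes $p,q$ dividing $g(\n)h(\n)$. In the paper these coprimalities are simply built into the indicator weight $w$ (the set $\cA$), and $g$ is taken to be $1$.
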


Once this result is combined with Lemma \ref{lem:big-h}, we see that the choices $C\ll 1$ and $H=B^{1/2}$ are sufficient to establish  Theorem~\ref{t:3}.

We now begin the proof of Lemma \ref{lem:small-h}.
Our plan  will be to fix  choices of $h_1,h_2$ and  $\rho$, and then to count the number of $r,s,\sigma$ that contribute to $N_1(B,C;H)$.  
Define the cubic polynomials
\begin{align}
\label{eq:F}
F(u,v)&=
h_2^2v^3+(h_1^2u^2+2a)v+c,\\
\label{eq:F'}
G(u,v)&=
v^3+(h_1^2h_2^2u^2+2a)v+h_2c,
\end{align}
where we recall that $2b=h_2c$.
Then \eqref{eq:tom} can be written 
\begin{equation}\label{eq:stare}
rF(r,s)=\rho G(\rho,\sigma).
\end{equation}

Recall the definition \eqref{eq:def-A} of $A$. We are proceeding under the assumption that 
$|A|>C\geq 1$. In particular $A\neq 0$. 
Part of our work will lead us to consider the  homogeneous quartic polynomial
\begin{equation}\label{eq:def-K}
\begin{split}
K(Z,X,Y,W)=~&W^4h_1h_2\rho G(\rho,Z/W)\\
&-2\{W^4f(X/W)-W^4f(Y/W)\},
\end{split}
\end{equation}
where $G$ is given by \eqref{eq:F'}. 
The condition on $C$ in Lemma \ref{lem:small-h} comes from  the following 
result.

\begin{lemma}\label{lem:non}
Assume that $C\gg 1$. Then $K$ is non-singular.
\end{lemma}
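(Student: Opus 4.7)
The plan is to study the vanishing locus of the partial derivatives of $K$ directly and show that any common zero forces the integer $A$ to lie in a finite set depending only on $f$; taking $C$ larger than every element of that set will then yield non-singularity.

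I would exploit the decomposition
$$K(Z,X,Y,W) = h_1h_2\rho\bigl\{WZ^3 + AZW^3 + h_2 c W^4\bigr\} - 2W^4 f(X/W) + 2W^4 f(Y/W),$$
whose three summands depend respectively only on $(Z,W)$, $(X,W)$ and $(Y,W)$. This gives
$$K_Z = h_1h_2\rho\, W(3Z^2 + AW^2), \quad K_X = -2W^3 f'(X/W), \quad K_Y = 2W^3 f'(Y/W),$$
as polynomial identities (with $f'(x) = 4x^3 + 2ax + b$). A brief inspection of the locus $W=0$ shows that the only common zero there is the origin, which is not a projective point. Assuming $W \neq 0$ and setting $W=1$, the equation $K_Z = 0$ forces $Z^2 = -A/3$ (legitimate since $A \neq 0$ when $|A| > C \geq 1$), while $K_X = K_Y = 0$ force both $X$ and $Y$ to be roots of $f'$.

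Substituting $Z^2 = -A/3$ into $K_W = 0$ collapses it to
$$\tfrac{8}{3}h_1h_2\rho A Z + 4h_1h_2^2\rho c + 4a(Y^2 - X^2) + 6b(Y - X) = 0. \quad (\star)$$
I would split on whether $X = Y$. In the diagonal case, $(\star)$ determines $Z$ rationally in $A$; squaring and using $h_2 c = 2b$ together with $Z^2 = -A/3$ yields $A^3 = -27b^2$, which forces $|A| \leq 3|b|^{2/3}$. In the non-diagonal case, let $r$ denote the third root of $f'$, so by Vieta $X+Y = -r$ and $(Y-X)^2 = -(3r^2 + 2a)$; combined with $b = -4r^3 - 2ar$, these identities simplify the last two summands of $(\star)$ to $8r(Y-X)^3$. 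Eliminating $Z$ via $Z^2 = -A/3$ then produces
$$N^2 A^3 + 27\bigl(bN + r(Y-X)^3\bigr)^2 = 0, \qquad N := h_1h_2\rho.$$
Since $A = N^2 + 2a$, the left-hand side is a polynomial of degree $8$ in $N$ with leading coefficient $1$, hence at most $8$ roots; summing over the (at most) three choices of $r$ yields only finitely many admissible $N$, each giving a bounded value of $A$.

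Together, these cases confine $|A|$ to a finite set depending only on $f$, so choosing $C$ larger than the maximum element of this set completes the proof. The principal obstacle is the algebraic book-keeping in the non-diagonal case: one must verify that Vieta cleanly reduces $(\star)$ to the displayed polynomial constraint in $N$, and confirm that the $N^8$ coefficient really equals $1$, so that the polynomial is not identically zero.
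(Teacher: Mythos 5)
Your proof is correct and takes essentially the same approach as the paper: compute the partial derivatives of $K$, rule out $W=0$, and on the patch $W=1$ use $Z^2=-A/3$, $f'(X)=f'(Y)=0$, together with $\partial K/\partial W=0$, to confine $A$ to a finite set depending only on $f$. The paper's endgame is slightly more economical---from $\partial K/\partial W=0$ it reads off directly that $h_1h_2\rho F(\xi,1)=O_f(1)$, while $|F(\xi,1)|\gg|A|^{3/2}$ since $\xi^2=-A/3$, giving an immediate contradiction---whereas you eliminate $Z$ via Vieta to obtain explicit polynomial constraints on $N=h_1h_2\rho$ and count roots, but both routes are valid and rest on the same computation.
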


\begin{proof}
We recall that $h_1h_2\rho\neq 0$ and $|A|>C$.
Returning to \eqref{eq:def-K} we see, by taking partial derivatives, that any singular point on the projective surface $K=0$ must satisfy 
$$
W(3Z^2+AW^2)=0
$$
and 
$$
4X^3+aXW^2+bW^3=0, \quad
4Y^3+2aYW^2+bW^3=0,
$$
in addition to $\partial K/\partial W=0$. A short calculation shows that
the latter constraint is equivalent to the equation
$$
h_1h_2\rho F(Z,W)=2\left\{2a(X^2-Y^2)W+3b(X-Y)W^2\right\},
$$
where $F(Z,W)=Z^3+3AZW^2+4h_2cW^3$. There can be no singular points with $W=0$. Hence it follows that there are at most $18$ singular points on $K=0$, and these all take the shape
$[\xi, \eta,\eta',1]$, where 
$$
\xi=\pm \sqrt{-A/3}
$$
and $\eta,\eta'$ are roots of the cubic equation $4t^3+2at+b=0$.
In particular, it follows that 
$
h_1h_2\rho F(\xi,1)\ll 1,
$
which is impossible provided that $C$ is taken to be sufficiently large in our lower bound
$|A|>C$. Hence there are no singular points, which thereby establishes  the lemma.
\end{proof}

We proceed to indicate  how the polynomial sieve will be brought to bear on the proof of Lemma \ref{lem:small-h}. The structure of our argument is modelled on that of Hooley \cite{hoo2}, corresponding to the special case $f(x)=x^4$.
We shall assume that $C\gg 1$ for the remainder of the proof, so that Lemma \ref{lem:non} applies and $K$ is non-singular.
Since $\gcd(r,\rho)=1$, 
  it follows 
  from \eqref{eq:stare}
  that $\rho\mid F(r,s)$ in any solution to be counted.
We therefore have
\begin{equation}\label{eq:N1-1}
N_1(B,C;H)\leq \sum_{h_1,h_2\leq H}
\sum_{\substack{\rho\leq 2B/(h_1h_2)\\ |A|>C}}
N_1(B;H;\ma{h},\rho),
\end{equation}
where $A$ is given by \eqref{eq:def-A} and 
$N_1(B;H;\ma{h},\rho)$ is equal to 
$$
\sum_{\substack{r\leq 2B/h_1,~s\leq 2B/h_2\\
\gcd(rs,\rho)=1\\
F(r,s)\equiv 0\bmod{\rho}
}}
\times\begin{cases}
1, &\mbox{if $\exists \sigma\in \ZZ$ s.t. $
\rho G(\rho,\sigma)
=rF(r,s)$,}\\
0, & \mbox{otherwise}.
\end{cases}
$$
This is now in a   form suitable for an application of Theorem \ref{t:1}.

To be precise, we take 
$$\cA=
\left\{
(r,s)\in \ZZ^2\cap (0,2B/h_1]\times (0,2B/h_2]:
\begin{array}{l}
\gcd(rs,\rho)=1\\
F(r,s)\equiv 0\bmod{\rho}
\end{array}
  \right\}
 $$ 
and $w$ to be the indicator function for this set. We take  
$$
f(x;r,s)=\rho G(\rho,x)-rF(r,s)
$$
and $g(r,s)=1$.
Recalling \eqref{eq:F'} we have   $d=3$ and 
$h(r,s)\mid \rho$ in
Theorem \ref{t:1}. In particular $f(x;r,s)$ never vanishes identically, for any $(r,s)\in \cA$.
Let 
$\alpha\geq 1$ and $Q\geq 1$ be parameters. 
Let $D_K$ be the discriminant of the  quartic form $K$ in 
\eqref{eq:def-K}. Then $D_K$ is a non-zero integer 
since $K$ is non-singular. We let 
\begin{equation}\label{eq:P}
\cP=
\{\mbox{primes $p\leq Q$}: \mbox{
$p\nmid 6 h_1h_2\rho AD_K$}\}.
\end{equation}
In particular $K$ remains non-singular modulo any prime $p\in \cP$.
For any $p\in \cP$ and $(r,s)\in \cA$, we put
\begin{equation}\label{eq:nu}
\nu_p(r,s)=\#\{x\bmod{p}: \rho G(\rho,x)\equiv rF(r,s) \bmod{p}\}.
\end{equation}
We will always assume that $Q$ satisfies $B^{1/100}\leq Q\leq B$. In particular
$$
\#\cP=\pi(Q)
-\#\{p\leq Q: 
\mbox{$p\mid 6 h_1h_2\rho AD_K$}\} \sim \frac{Q}{\log Q},
$$   
by the prime number theorem.
It now follows from Theorem \ref{t:1} that 
\begin{equation}\label{eq:N1-2}
N_1(B;H;\ma{h},\rho)\ll \frac{\log^2 Q}{Q^2}\sum_{p,q\in \cP}\left|
\sum_{i,j\in \{0,1,2\}} c_{i,j}(\alpha)S_{i,j}
\right|,
\end{equation}
with 
$$
S_{i,j}=\sum_{\substack{(r,s)\in \cA}}  \nu_p(r,s)^i\nu_q(r,s)^j
$$
and 
\begin{equation}\label{eq:c-3}
c_{i,j}(\alpha)=
\begin{cases}
(\alpha-3)^2, & \mbox{if $(i,j)=(0,0)$,}\\
4(\alpha-3), & \mbox{if $(i,j)= (1,0)$ or $(0,1)$,}\\
16, & \mbox{if $(i,j)=(1,1)$,}\\
3-\alpha, & \mbox{if $(i,j)= (2,0)$ or $(0,2)$,}\\
-4, & \mbox{if $(i,j)= (2,1)$ or $(1,2)$,}\\
1, & \mbox{if $(i,j)=(2,2)$.}
\end{cases}
\end{equation}
We will ultimately be led to take $\alpha=1$ in \S \ref{s:conclusion}.

To analyse $S_{i,j}$ we will break the sum into congruence classes modulo $pq\rho$.
Let $Y\geq 1$ and let $N\in \ZZ$ with $|N|\leq pq\rho/2$. Then we have 
\begin{equation}\label{eq:Gamma}
\Gamma(Y,N)=\sum_{y\leq Y} \e_{pq\rho} (-Ny)\ll \min\left\{Y, \frac{pq\rho}{|N|}\right\}.
\end{equation}
Given $r\in \ZZ$  the orthogonality of characters yields
\begin{align*}
\#\{x\leq 2B/h_1: ~& x\equiv r \bmod{pq\rho}\}\\
&=
\frac{1}{pq\rho} \sum_{m\bmod{pq\rho}}  \e_{pq\rho} (mr)
\sum_{x\leq 2B/h_1}
\e_{pq\rho} (-mx)\\
&=
\frac{1}{pq\rho} \sum_{-pq\rho/2<m\leq pq\rho/2}  \e_{pq\rho} (mr)
\Gamma\left(\frac{2B}{h_1},m\right),
\end{align*}
and similarly for 
$\#\{y\leq 2B/h_2: y\equiv s \bmod{pq\rho}\}$.
Hence 
\begin{equation}\label{eq:Sij}
S_{i,j}=
\frac{1}{(pq\rho)^2} 
\hspace{-0.2cm}
\sum_{-pq\rho/2<m,n\leq pq\rho/2}  
\hspace{-0.4cm}
\Gamma\left(\frac{2B}{h_1},m\right)
\Gamma\left(\frac{2B}{h_2},n\right)\Psi_{i,j}(m,n),
\end{equation}
with 
$$
\Psi_{i,j}(m,n)
=
\sum_{\substack{(r,s) \bmod{pq\rho}\\ \gcd(\rho,rs)=1\\
F(r,s)\equiv 0\bmod{\rho}
}}  \nu_p(r,s)^i\nu_q(r,s)^j
 \e_{pq\rho} (mr+ns).
$$
It therefore remains to understand the exponential sums
$\Psi_{i,j}(m,n)$.
For typical values of $m,n$ we want  to show that there is enough cancellation in the sum to make its modulus rather small.   Recall from the definition \eqref{eq:P} of $\cP$ that $\gcd(pq, \rho)=1$. Using this, we are able to establish the following factorisation property. 

\begin{lemma}\label{lem:fact}
Suppose that $p\neq q$ and choose $p',q',\bar{pq},\bar{\rho}\in \ZZ$ such that 
$pq\bar{pq}+\rho\bar{\rho}=1$
and 
$pp'+qq'=1$.
Then we have 
$$
\Psi_{i,j}(m,n)=\Sigma_i(p;\bar{\rho}q'm,\bar{\rho}q'n)\Sigma_j(q;\bar{\rho}p'm,\bar{\rho}p'n)\Phi(\rho;\bar{pq}m,\bar{pq}n), $$
where
\begin{align}
\label{eq:sig-i}
\Sigma_t(p;M,N)&=
\sum_{\substack{(r,s) \bmod{p}}}
 \nu_p(r,s)^t
 \e_{p} (Mr+Ns),\\
\label{eq:sig}
\Phi(\rho;M,N)&=
 \sum_{\substack{(r,s) \bmod{\rho}\\ \gcd(\rho,rs)=1\\
F(r,s)\equiv 0\bmod{\rho}
}}  \e_{\rho} (Mr+Ns).
\end{align}
Suppose that $p=q$ and choose $\bar{p},\bar{\rho}\in \ZZ$ such that 
$p\bar{p}+\rho\bar{\rho}=1$.
Then we have 
$$
\Psi_{i,j}(m,n)=
\begin{cases}
p^2 
\Sigma_{i+j}(p;\bar{\rho}m',\bar{\rho}n')\Phi(\rho;\bar{p}m',\bar{p}n'), & 
\mbox{if $(m,n)=p(m',n')$,}\\
0, & \mbox{otherwise}.
\end{cases}
$$
\end{lemma}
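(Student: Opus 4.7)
The plan is to establish this factorisation by a direct Chinese Remainder Theorem decomposition. Two preliminary observations drive everything. First, the local solution count $\nu_p(r,s)$ from \eqref{eq:nu} depends only on $(r,s)\bmod p$, and likewise $\nu_q(r,s)$ on $(r,s)\bmod q$, since $\rho$ is a fixed parameter. Second, both the congruence $F(r,s)\equiv 0\bmod{\rho}$ and the coprimality condition $\gcd(\rho,rs)=1$ depend only on $(r,s)\bmod{\rho}$. Combined with $\gcd(pq,\rho)=1$, which is built into the definition \eqref{eq:P} of $\cP$, this is the entire structural input for the factorisation.

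I will first handle the case $p\neq q$. Here $p,q,\rho$ are pairwise coprime, and CRT parametrises residues $(r,s)\bmod{pq\rho}$ by independent triples of reductions modulo $p$, $q$, and $\rho$. The remaining task is to decompose the additive character accordingly: from $pq\bar{pq}+\rho\bar{\rho}=1$ together with $pp'+qq'=1$, one verifies the partial-fraction identity
$$
\frac{1}{pq\rho}\equiv \frac{\bar{\rho}q'}{p}+\frac{\bar{\rho}p'}{q}+\frac{\bar{pq}}{\rho} \pmod 1.
$$
This splits $\e_{pq\rho}(mr+ns)$ into a product of three exponentials, each depending only on the corresponding reduction of $(r,s)$. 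The triple sum then separates cleanly into the stated product of $\Sigma_i(p;\cdot)$, $\Sigma_j(q;\cdot)$, and $\Phi(\rho;\cdot)$.

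The case $p=q$ is analogous but requires an extra step. The modulus is $p^2\rho$ with $\gcd(p^2,\rho)=1$, and the $\rho$-component is handled as above. For the $p^2$-component, I parametrise $r=r_0+pr_1$ and $s=s_0+ps_1$ with $r_0,s_0,r_1,s_1$ each ranging modulo $p$. Then $\nu_p(r,s)=\nu_p(r_0,s_0)$ depends only on $(r_0,s_0)$, while the exponential splits as
$$
\e_{p^2}\bigl(\rho^{-1}(mr+ns)\bigr)=\e_{p^2}\bigl(\rho^{-1}(mr_0+ns_0)\bigr)\cdot \e_p\bigl(\rho^{-1}(mr_1+ns_1)\bigr).
$$
The free inner sum over $(r_1,s_1)\bmod p$ contributes $p^2$ precisely when $p\mid m$ and $p\mid n$, and vanishes otherwise; this is the source of both the $p^2$ factor and the vanishing clause.

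Writing $m=pm'$ and $n=pn'$ in the surviving case, the $p^2$-level exponential collapses to $\e_p(\bar{\rho}(m'r_0+n's_0))$, producing $\Sigma_{i+j}(p;\bar{\rho} m',\bar{\rho} n')$ upon noting $\rho^{-1}\equiv\bar{\rho}\bmod{p}$. On the $\rho$-side one is left with $\e_\rho(\bar{p^2}\cdot p(m'r+n's))$, where $\bar{p^2}$ denotes the inverse of $p^2$ modulo $\rho$; since $p\bar{p}\equiv 1\bmod{\rho}$ one has $p\bar{p^2}\equiv \bar{p}\bmod{\rho}$, delivering the factor $\Phi(\rho;\bar{p} m',\bar{p} n')$. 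The only delicate point in the argument is this bookkeeping of CRT inverses in the $p=q$ case; once they are correctly reconciled, the stated identity follows by direct substitution.
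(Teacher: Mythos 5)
Your proof is correct and is essentially the same argument the paper gives: both decompose $(r,s)\bmod{pq\rho}$ via CRT using an explicit substitution/partial-fraction identity (your displayed identity is just the paper's substitution written out), and in the $p=q$ case both first split off the $\rho$-component and then further decompose the $p^2$-component as $r_0+pr_1$, $s_0+ps_1$, with the inner $(r_1,s_1)$-sum supplying the $p^2$ factor and the vanishing condition. Your bookkeeping of the inverses ($\rho^{-1}\equiv\bar\rho\bmod p$ and $p\cdot\overline{p^2}\equiv\bar p\bmod\rho$) matches what the paper's substitution produces, so the two proofs agree in all essentials.
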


\begin{proof}
The proof of this result is standard. The first part is obtained by making the substitution 
$$
r=(r_0qq'+r_1pp')\rho\bar{\rho}+r_2 pq\bar{pq}, \quad 
s=(s_0qq'+s_1pp')\rho\bar{\rho}+s_2 pq\bar{pq}, 
$$
for $r_0,s_0 \bmod{p}$, $r_1,s_1 \bmod{q}$ and 
$r_2,s_2 \bmod{\rho}$, with $r_2s_2$ coprime to $\rho$. 
For the second part we make the substitution
$$
r=r_1\rho\bar{\rho}+r_2 (p\bar{p})^2, \quad 
s=s_1\rho\bar{\rho}+s_2 (p\bar{p})^2, 
$$
for $r_1,s_1 \bmod{p^2}$ and 
$r_2,s_2 \bmod{\rho}$, 
with $r_2s_2$ coprime to $\rho$.
This leads to the expression
$$
\Psi_{i,j}(m,n)=\Phi(\rho;\bar{p}^2m,\bar{p}^2n)
\sum_{\substack{(r_1,s_1) \bmod{p^2}}}
\nu_p(r_1,s_1)^{i+j}
\e_{p^2} (\bar{\rho}\{mr_1+ns_1\}),
$$
in the notation of the lemma. Writing $r_1=r_1'+pr_1''$ for $r_1',r_1''\bmod{p}$, and similarly for $s_1$, the second factor becomes
$$
\sum_{\substack{(r_1',s_1') \bmod{p}}}
\hspace{-0.2cm}
\nu_p(r_1',s_1')^{i+j}
\e_{p^2} (\bar{\rho}\{mr_1'+ns_1'\})
\hspace{-0.3cm}
\sum_{\substack{(r_1'',s_1'') \bmod{p}}}
\e_{p} (\bar{\rho}\{mr_1''+ns_1''\}).
$$
But the inner sum is zero unless $p\mid \gcd(m,n)$, in which case it is $p^2$.
This completes the proof of the lemma.
\end{proof}

We have reduced our task to a detailed analysis of the exponential sums 
$\Sigma_t(p;M,N)$ and $\Phi(\rho;M,N)$, for $0\leq t\leq 4$ and given $M,N\in \ZZ$.
This will be the object of the following two sections. 
The trivial bound for 
$\Sigma_t(p;M,N)$ is  $O(p^2)$. Likewise, in the special case that $\rho$ is a prime, the trivial bound for $\Phi(\rho;M,N)$ is $O(\rho)$. In our work we will show that for generic choices of $M,N$ these sums actually 
satisfy square-root cancellation.
We will do so using 
 the Weil bound for curves and the Deligne bound for surfaces, combined with an elementary treatment of $\Phi(\rho;M,N)$ when $\rho$ is a higher prime power.
We prepare the ground by framing some basic tools that will be common to both. 
Given any sum over residue classes, we will use $\sum^*$ to mean a sum in which all the variables of summation are coprime to the modulus.

Our primary means of estimating the exponential sums for prime modulus will be the  ``method of moments'' developed by Hooley \cite{hooleysum}, as summarised in the following result. 

\begin{lemma}\label{genhoo}
Let $F$ and $G_1,\dots,G_k$ be polynomials over $\ZZ$, of degree at most $d$, and let
$$
S=\sum_{\substack{\x\in\mathbb{F}_p^n\\ G_1(\x)=\dots=G_k(\x)=0}}\e_p(F(\x)), 
$$ 
for any prime $p$. For each $j\ge 1$ and $\tau\in \FF_{p^j}$, write
$$
N_j(\tau)=\#\{\x\in\mathbb{F}_{p^j}^n:\,G_1(\x)=\dots= G_k(\x)=0,\,F(\x)=\tau\},
$$
and suppose that  there exists
 $N_j\in \RR$ such that
\begin{equation}\label{hoo1}
\sum_{\tau\in\mathbb{F}_{p^j}}|N_j(\tau)-N_j|^2\ll_{d,k,n} p^{\kappa j},
\end{equation}
where $\kappa\in \ZZ$ is independent of $j$.  Then
$
S\ll_{d,k,n}p^{\kappa/2}.
$
\end{lemma}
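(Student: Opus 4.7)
The plan is to translate the moment hypothesis \eqref{hoo1} into a Parseval-type bound on additive character sums, and then extract the pointwise conclusion via Hooley's moments method. Let $\psi_j = \e_p \circ \tr_{\FF_{p^j}/\FF_p}$ denote the canonical non-trivial additive character of $\FF_{p^j}$, and for $a \in \FF_{p^j}$ set
\[
S_{a,j} = \sum_{\substack{\x \in \FF_{p^j}^n\\ G_1(\x) = \dots = G_k(\x) = 0}} \psi_j(aF(\x)),
\]
so that $S = S_{1,1}$ is the sum we wish to estimate. Since $\sum_{\tau \in \FF_{p^j}} \psi_j(\tau) = 0$, we may rewrite $S_{1,j} = \sum_\tau (N_j(\tau) - N_j)\,\psi_j(\tau)$, whereupon Cauchy--Schwarz and the hypothesis give
\[
|S_{1,j}|^2 \le p^j \sum_\tau |N_j(\tau) - N_j|^2 \ll_{d,k,n} p^{(\kappa+1)j}.
\]
Taking $j = 1$ yields only $|S| \ll p^{(\kappa+1)/2}$, short of the target by a factor of $p^{1/2}$; recovering this factor is the crux of the argument.

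To bridge this gap, I invoke the Grothendieck--Lefschetz trace formula applied to the Artin--Schreier sheaf attached to $F$ on $V = \{G_1 = \dots = G_k = 0\}$, together with Deligne's Weil II. Since $F, G_1, \dots, G_k$ have degree at most $d$, one obtains a decomposition
\[
S_{1,j} = \sum_{r=1}^L \epsilon_r \alpha_r^j \qquad (j \ge 1),
\]
with $L = O_{d,k,n}(1)$, $\epsilon_r \in \{\pm 1\}$, and Frobenius eigenvalues $\alpha_r$ satisfying $|\alpha_r| = p^{w_r/2}$ for non-negative integer weights $w_r$. The Cauchy--Schwarz estimate, valid for every $j$, already forces $\max_r w_r \le \kappa + 1$. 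To sharpen this to $\max_r w_r \le \kappa$, I apply Parseval over all additive characters of $\FF_{p^j}$ at once: since any non-trivial character is of the form $\tau \mapsto \psi_j(a\tau)$ with $a \in \FF_{p^j}^\times$, a direct computation gives
\[
\sum_{a \in \FF_{p^j}^\times} |S_{a,j}|^2 + \bigl(|V(\FF_{p^j})| - N_j p^j\bigr)^2 = p^j \sum_\tau |N_j(\tau) - N_j|^2 \ll p^{(\kappa+1)j},
\]
so that $|S_{a,j}|^2$ is $O(p^{\kappa j})$ on average over the $p^j - 1$ non-trivial characters. Each $S_{a,j}$ admits an analogous trace-formula expansion, and computing a sufficiently high moment $\sum_a |S_{a,j}|^{2m}$ and comparing with the expected size $\ll p^{\kappa m j}$ pins down every weight $w_r$ to be at most $\kappa$. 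Setting $j = 1$ then yields the claim $|S| \ll p^{\kappa/2}$.

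The main obstacle is the last step: passing rigorously from the averaged-over-characters bound to a pointwise bound on $S$. The decisive ingredients are the algebraic rigidity of the Frobenius eigenvalues -- their absolute values are constrained to half-integer powers of $p$ -- and the hypothesis that $\kappa$ is itself an integer, ruling out any weight strictly between $\kappa$ and $\kappa+1$. Any genuine cluster of eigenvalues of the forbidden weight $\kappa+1$ would produce character sums too large to be compatible with \eqref{hoo1} for some extension $\FF_{p^j}$, contradicting the hypothesis. Thus Hooley's method uses the full strength of the moment bound across all $j \ge 1$ to pin down the weights exactly, delivering the claimed square-root cancellation.
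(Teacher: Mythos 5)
The paper does not actually prove Lemma~\ref{genhoo}; it is quoted from Hooley's ``method of moments'' paper \cite{hooleysum}, so there is no in-paper proof to compare against. Judged on its own terms, your sketch identifies the right ingredients (Grothendieck--Lefschetz, Deligne's integrality of weights, Parseval over additive characters, and the integrality of $\kappa$), but the decisive step is not carried out and, as written, would not close the $p^{1/2}$ gap.

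The problem is the passage from the averaged bound $\sum_{a\ne 0}|S_{a,j}|^2\ll p^{(\kappa+1)j}$ to the pointwise bound on $S=S_{1,1}$. You propose to do this by ``computing a sufficiently high moment $\sum_a|S_{a,j}|^{2m}$ and comparing with the expected size $\ll p^{\kappa m j}$,'' but that expected size is not a consequence of the hypothesis, which only controls the second moment; there is no a priori bound on $\sum_a|S_{a,j}|^{2m}$. Moreover, the final sentence (``a genuine cluster of eigenvalues of the forbidden weight $\kappa+1$ would produce character sums too large to be compatible with~\eqref{hoo1}'') is asserted rather than proved, and the naive version is false: a single $a$ having an eigenvalue of weight $\kappa+1$ merely yields $T_j\ge p^{-j}|S_{1,j}|^2\gg p^{\kappa j}$, which is perfectly consistent with~\eqref{hoo1}. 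What is missing is a mechanism that forces \emph{many} characters $a$ to contribute simultaneously. The standard way to supply this is: (i) observe that for $a\in\FF_p^\times$ the sheaves $\mathcal{L}_{\psi(aF)}$ are Galois twists of one another, so the multiset of Frobenius weights is independent of $a$; (ii) use Ces\`aro averaging over $j$, namely $\lim_{J\to\infty}\tfrac1J\sum_{j\le J}p^{-w_{\max}j}|S_{a,j}|^2$ equals the number of top-weight eigenvalues for $S_{a,\cdot}$, which is at least $1$ for every $a\in\FF_p^\times$; (iii) summing over the $p-1$ characters gives $\gg p$, which must be $\le$ an absolute constant (coming from the hypothesis) whenever $w_{\max}\ge\kappa+1$, forcing $p=O_{d,k,n}(1)$; and (iv) dispose of the finitely many small primes by the trivial bound $|S|\le p^n$. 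Neither the Galois-conjugacy input nor the averaging over $j$ appears in your write-up, and the higher-moment comparison you suggest in their place does not follow from the hypothesis. So the proposal has a genuine gap at exactly the point you yourself flag as ``the crux.''
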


Let  $q$ be a prime power. We 
shall need to be able to count $\FF_q$-points on certain varieties. 
In dimension $2$ we will call upon the work of  Deligne \cite{deligne} and in dimension $1$ we will use work of Weil \cite{weil}. The facts that we need are summarised in the following two results. 

\begin{lemma}\label{lem:deligne}
Let $W\subset \PP_{\FF_q}^n$ be a non-singular complete intersection of dimension $2$ and degree  $d$. 
Then 
$$
\#\{\x\in \FF_q^n: [\x]\in W\}=q^{3}+O_{d,n}(q^{2}).
$$
\end{lemma}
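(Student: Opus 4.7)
My plan is to apply the Weil conjectures (in the form proved by Deligne) to the projective surface $W$ and then pass from the projective count to the affine-cone count.

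First I would estimate $\#W(\FF_q)$. Since $W$ is a smooth complete intersection of dimension $2$, the Lefschetz hyperplane theorem tells us that its $\ell$-adic cohomology agrees with that of $\PP^n_{\FF_q}$ outside the middle degree: $H^0 \cong \QQ_\ell$, $H^1 = 0$, $H^3 = 0$, $H^4 \cong \QQ_\ell(-2)$, and $H^2$ consists of the hyperplane class together with a primitive part $H^2_{\mathrm{prim}}$ whose dimension $b = b(d,n)$ depends only on $d$ and $n$. By Deligne's theorem, the Frobenius eigenvalues on $H^2_{\mathrm{prim}}$ all have absolute value $q$. The Lefschetz trace formula then gives
$$
\#W(\FF_q) = 1 + q + q^2 + \mathrm{tr}(\mathrm{Frob}\mid H^2_{\mathrm{prim}}) = q^2 + O_{d,n}(q),
$$
with the implied constant at most $b(d,n) + 1$.

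Next I would convert this to an affine count. Writing $\widetilde{W} = \{\x\in\FF_q^n : [\x]\in W\}$ for the set in question, the origin $\0$ lies in $\widetilde{W}$ (by convention, with $[\x]$ only defined for $\x\neq \0$ we either include $\0$ separately or exclude it; either way it is an $O(1)$ adjustment). For any nonzero representative of a projective point in $W(\FF_q)$, the $q-1$ nonzero scalar multiples give distinct points of the affine cone. Hence
$$
\#\widetilde{W} = 1 + (q-1)\,\#W(\FF_q) = 1 + (q-1)\bigl(q^2 + O_{d,n}(q)\bigr) = q^3 + O_{d,n}(q^2),
$$
which is the asserted bound.

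The only real content is Deligne's bound on the Frobenius eigenvalues in middle cohomology; everything else is bookkeeping. The one point requiring minor care is the uniformity of the implied constant: it is crucial that the primitive Betti number $b_2^{\mathrm{prim}}$ of a smooth complete intersection surface depends only on the multidegree (hence on $d$ and $n$), which follows from the standard formula for the Poincaré polynomial of complete intersections. This justifies writing the error as $O_{d,n}(q^2)$ uniformly in $W$.
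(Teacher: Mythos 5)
Your proof is correct. The paper states this lemma without proof, simply citing Deligne's ``La conjecture de Weil, I,'' and the argument you give — Lefschetz hyperplane theorem to reduce to $H^2_{\mathrm{prim}}$, Deligne's purity bound $|\alpha|=q$ on its Frobenius eigenvalues, the Lefschetz trace formula, uniformity of the primitive Betti number in terms of $(d,n)$, and then the passage from the projective count to the affine cone via $\#\widetilde{W}=1+(q-1)\#W(\FF_q)$ — is exactly the standard derivation the citation is meant to invoke.
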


\begin{lemma}\label{lem:weil}
Let $V\subset \AA_{\FF_q}^n$ be an absolutely irreducible curve of degree $d$. 
Then 
$$
\#V(\FF_q)=q+O_{d,n}(q^{1/2}).
$$
\end{lemma}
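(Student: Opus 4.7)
My plan is to reduce Lemma \ref{lem:weil} to Weil's Riemann hypothesis for smooth projective curves. First I would pass to the projective closure $\bar V \subset \PP^n_{\FF_q}$ of the affine curve $V$. The points at infinity lie on the intersection of $\bar V$ with the hyperplane at infinity, a zero-dimensional scheme of degree at most $d$; their contribution is $O_d(1)$, so that $\#V(\FF_q) = \#\bar V(\FF_q) + O_d(1)$.

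Next I would pass to the normalisation $\pi\colon \tilde V \to \bar V$, which is a smooth projective absolutely irreducible curve over $\FF_q$, since $\bar V$ is an absolutely irreducible projective curve. The morphism $\pi$ is a bijection away from the singular locus of $\bar V$, which is a zero-dimensional subscheme whose degree is bounded purely in terms of $d$ and $n$ (via Bezout-type considerations applied to the defining equations and the minors of their Jacobian). It follows that $|\#\tilde V(\FF_q) - \#\bar V(\FF_q)| = O_{d,n}(1)$.

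Applying Weil's theorem to the smooth projective curve $\tilde V$ of geometric genus $g$ yields $\#\tilde V(\FF_q) = q + 1 + E$ with $|E| \leq 2g\, q^{1/2}$. The genus satisfies $g = O_{d,n}(1)$: a generic linear projection from $\PP^n$ to $\PP^2$ produces a birational plane model of $\tilde V$ of degree at most $d$, and a plane curve of degree $d$ has geometric genus at most $\binom{d-1}{2}$. Assembling these estimates gives the stated bound.

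The main obstacle is tracking the uniformity of the implied constants in $d$ and $n$. The individual ingredients---Weil's bound itself, the control of the singular locus, and the Castelnuovo-type bound on the genus of a degree $d$ curve in $\PP^n$---are all classical; the work lies in verifying that each constant depends only on the degree and the ambient dimension, and not on the particular curve under consideration.
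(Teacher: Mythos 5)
The paper treats this lemma as a black box: it cites Weil's 1948 memoir directly and supplies no proof, so there is no in-paper argument to compare against. Your derivation via compactification, normalisation and Weil's Riemann hypothesis for smooth projective curves is the standard one and is essentially correct. The one step that deserves tightening is the bound on the singular locus: a degree~$d$ curve in $\PP^n$ with $n\geq 3$ is not in general cut out by forms of degree bounded in terms of $d$, so the B\'ezout-plus-Jacobian argument is not quite direct. A cleaner route is via the total $\delta$-invariant. A general linear projection to $\PP^2$ yields a plane curve $C$ of degree $d$ birational to $\bar V$; since $\bar V\to C$ is finite and birational, the arithmetic genus can only increase, giving $p_a(\bar V)\leq p_a(C)=\binom{d-1}{2}$. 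Then $\delta(\bar V)=p_a(\bar V)-g\leq\binom{d-1}{2}$, each singular point contributes at least $1$ to $\delta$, and each has at most $d$ preimages under $\pi$ (the multiplicity of any point on a degree-$d$ curve is at most $d$). This makes all implied constants transparently functions of $d$ alone, and the remainder of your argument goes through exactly as written.
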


\section{The exponential sums $\Sigma_t(p;M,N)$}

In this section we examine the exponential sum 
$\Sigma_t=\Sigma_t(p;M,N)$ in 
\eqref{eq:sig-i} for a prime  $p\nmid 6h_1h_2\rho A D_K$,
where $A$ is given by \eqref{eq:def-A} and $D_K$ is the non-zero discriminant of the quartic form \eqref{eq:def-K}.
For $i=1,2$, we let $\bar h_i\in \ZZ$ be such that 
$h_i\bar h_i \equiv 1 \bmod{p}$.
Recall the definitions \eqref{eq:F}, \eqref{eq:F'} of the cubic polynomials
$F$ and $G$. Reversing the changes of variables leading to these, one easily checks that 
\begin{equation}\label{eq:change}
h_1h_2uF(u,v)=2\left\{ f\left(\frac{h_1u+h_2v}{2}\right)-  f\left(\frac{-h_1u+h_2v}{2}\right) \right\},
\end{equation}
where $f(x)=x^4+ax^2+bx$. 
We will argue according to the value of $t$.

When $t=0$ we trivially have 
\begin{equation}\label{eq:sig-0}
\Sigma_0=
\begin{cases}
p^2, &\mbox{if $p\mid \gcd(M,N)$,}\\
0, & \mbox{otherwise}.
\end{cases}
\end{equation}
Next, when $t=1$, we open up the function $\nu_p(r,s)$ given by \eqref{eq:nu} to conclude that
$$
\Sigma_1
=
\sum_{\substack{(r,s,x)\in  \FF_p^3\\
\rho G(\rho,x)= rF(r,s) }} \e_p(Mr+Ns).
$$
We will show that 
\begin{equation}\label{eq:sig1'}
\Sigma_1=O\left(p\gcd(p,M,N)\right).
\end{equation}
On  carrying out the non-singular change of variables implicit in \eqref{eq:change},
 we obtain
\begin{equation}\label{eq:sig1}
\begin{split}
\Sigma_1
&=S_p( M\bar h_1+N\bar h_2, -M\bar h_1+ N\bar h_2),
\end{split}
\end{equation}
where  for $\c=(c_1,c_2)\in \ZZ^2$ we set
$$
S_p(\c)=\sum_{\substack{(x,y,z)\in \FF_p^3\\
h_1h_2\rho G(\rho,z)=2\{f(x)-f(y)\}
}}\e_p(c_1x +c_2y).
$$
Inserting the second part of the following result into \eqref{eq:sig1} establishes \eqref{eq:sig1'}.

\begin{lemma}\label{lem:S_p}
We have
$$
S_p(0,0)=p^2+O(p) \quad \mbox{and} \quad 
S_p(\c)=O(p\gcd(p,c_1,c_2)).
$$
\end{lemma}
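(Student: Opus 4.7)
The strategy is to recognise $S_p(\c)$ as an exponential sum on the affine surface $V\subset\AA_{\FF_p}^3$ cut out by the equation $K(Z,X,Y,1)=0$, where $K$ is the quartic form in \eqref{eq:def-K}: the summation condition $h_1h_2\rho G(\rho,z)=2\{f(x)-f(y)\}$ is precisely the dehomogenisation of $K=0$. Since $p\nmid D_K$, Lemma~\ref{lem:non} ensures that the projective closure $\bar V=\{K=0\}\subset\PP_{\FF_p}^3$ is a non-singular (hence absolutely irreducible) quartic surface.

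For the first assertion, $S_p(0,0)=\#V(\FF_p)$. Applying Lemma~\ref{lem:deligne} to $\bar V$, the number of projective $\FF_p$-points is $p^2+O(p)$. The locus at infinity is cut out by $K(Z,X,Y,0)=-2(X^4-Y^4)=-2(X-Y)(X+Y)(X^2+Y^2)=0$ in the hyperplane $W=0$; since $Z$ is a free coordinate, this is a union of at most four lines in $\PP_{\FF_p}^2$, contributing only $O(p)$ points. Subtracting yields $S_p(0,0)=p^2+O(p)$.

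For the bound on $S_p(\c)$, the case $p\mid\gcd(c_1,c_2)$ follows trivially from the first part: $|S_p(\c)|\leq S_p(0,0)\ll p^2=p\gcd(p,c_1,c_2)$. Assume now that $p\nmid\gcd(c_1,c_2)$. I invoke the method of moments (Lemma~\ref{genhoo}) with $n=3$, single constraint $G_1(X,Y,Z)=K(Z,X,Y,1)$, and phase $F(X,Y,Z)=c_1X+c_2Y$. For $\tau\in\FF_{p^j}$, $N_j(\tau)$ is the number of $\FF_{p^j}$-points on the affine curve $C_\tau$ obtained by slicing $V$ with $c_1X+c_2Y=\tau$. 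Taking $N_j=p^j$, the aim is to verify \eqref{hoo1} with $\kappa=2$. Consider the pencil of hyperplane sections $\{c_1X+c_2Y-\tau W=0\}$ of $\bar V$: since $\bar V$ is smooth and the pencil is cut by linear forms, Bertini's theorem together with the classical connectedness of smooth hyperplane sections of an irreducible smooth surface in $\PP^3$ implies that $\bar C_\tau$ is an absolutely irreducible smooth curve of degree $4$ for all but $O(1)$ values of $\tau$. For such good $\tau$, Lemma~\ref{lem:weil} gives $N_j(\tau)=p^j+O(p^{j/2})$; for the exceptional $\tau$ we use the trivial bound $N_j(\tau)=O(p^j)$. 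Hence
$$
\sum_{\tau\in\FF_{p^j}}|N_j(\tau)-p^j|^2\ll p^j\cdot p^j+p^{2j}\ll p^{2j},
$$
so Lemma~\ref{genhoo} yields $S_p(\c)\ll p$, completing the proof.

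The main obstacle is the geometric input in the moment step: uniformly bounding, independently of $j$, the number of pencil parameters $\tau$ for which $\bar C_\tau$ fails to be absolutely irreducible. This reduces to the fact that the degeneracy locus in $\PP^1$ is cut out by a discriminant-type condition whose degree depends only on $\deg K=4$, and that the base locus $\{X=Y=0\}$ meets $\bar V$ in at most four points; both facts are a consequence of the smoothness of $\bar V$ guaranteed by $p\nmid D_K$.
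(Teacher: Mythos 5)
Your proof is correct and follows essentially the same approach as the paper: Lemma~\ref{lem:deligne} for $S_p(0,0)$ (the paper works with the affine cone and divides by $p-1$ rather than re-interpreting the lemma projectively, but this is cosmetic), and the method of moments (Lemma~\ref{genhoo}) combined with Lemma~\ref{lem:weil} for $S_p(\c)$. The only difference is that you invoke Bertini and connectedness of hyperplane sections to get the uniform $O(1)$ bound on exceptional $\tau$, whereas the paper cites Hilbert's irreducibility theorem; both are shorthand for the same Noether--Bertini-type fact that the degeneracy locus is cut out by polynomials of degree bounded in terms of $\deg K$ alone.
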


\begin{proof}
We begin by establishing the first part of the lemma.
We convert the problem into one involving projective varieties via the identity
$$
S_p(0,0)=\frac{1}{p-1}
\#\{(z,x,y,w)\in \FF_p^4: 
K(z,x,y,w)=0, ~w\neq 0\},
$$
where $K$ is given by \eqref{eq:def-K}.
Combining Lemmas \ref{lem:deligne} and \ref{lem:non}, we see that 
$$
\#\{(z,x,y,w)\in \FF_p^4: 
K(z,x,y,w)=0\} = p^{3}+O(p^2),
$$
since $K$ is non-singular over $\FF_p$.
But $K(Z,X,Y,0)=Y^4-X^4$.
Hence
$$
\#\{(z,x,y)\in \FF_p^3: 
K(z,x,y,0)=0\} =O(p^2).
$$
Putting these two estimates together gives
 $S_p(0,0)=p^2+O(p)$.

Turning to $S_p(\c)$ for general $\c\in \ZZ^2$, we may assume that $p\gg 1$ and 
$p\nmid (c_1,c_2)$, else the bound follows from the first part of the lemma.
Let $j\geq 1$ and put $q=p^j$. Since 
$K$ is non-singular over $\FF_q$, it follows that  $K(z,x,y,1)$ must be absolutely irreducible over $\FF_q$.
We apply Lemma~\ref{genhoo} with $k=1$ and $n=3$. 
 Then, for $\tau\in \FF_q$,  we must consider
$$
N_j(\tau)=\#\{(x,y,z)\in \FF_q^3:
K(z,x,y,1)=0, ~c_1x +c_2y=\tau
\}.
$$
We reduce $c_1,c_2$ modulo $p$ and view them as elements of $\FF_q$. Without loss of generality we assume  that $c_1\neq 0$,  eliminating $x$ to get
$$
N_j(\tau)=\#\{(y,z)\in \FF_q^2:
K(z,-c_1^{-1}c_2y+c_1^{-1}\tau,y,1)=0
\}.
$$
It follow from Hilbert's irreducibility theorem that there are $O(1)$ values of $\tau\in \FF_q$ for which 
$K(z,-c_1^{-1}c_2y+c_1^{-1}\tau,y,1)$ fails to be absolutely irreducible. 
For these we employ the trivial bound $N_j(\tau)=O(q)$.
For the remaining values of $\tau$, 
 Lemma \ref{lem:weil} yields
$
N_j(\tau)=q+O(q^{1/2}),
$
uniformly in $\tau$.
Taking $N_j=q$ in Lemma \ref{genhoo} therefore permits the choice 
$\kappa=2$ in \eqref{hoo1}, which completes the proof of the lemma.
\end{proof}

We now turn to the case $t=2$.
Define the quadratic polynomial
$$
H(Z_1,Z_2,W)=Z_1^2+Z_1Z_2+Z_2^2+AW^2,
$$
where $A$ is given by \eqref{eq:def-A}. This quadratic form is non-singular modulo $p$, since $p\nmid A$ for any $p\in \cP$.  Next,  observe that 
\begin{equation}\label{eq:H}
\rho G(\rho,x_1)-\rho G(\rho,x_2)=\rho(x_1-x_2)H(x_1,x_2,1).
\end{equation}
Opening up $\nu_p(r,s)^2$ in $\Sigma_2$ gives
\begin{equation}\label{eq:bishop}
\Sigma_2=\Sigma_1+
\sum_{\substack{(r,s,x_1,x_2)\in  \FF_p^4\\
x_1-x_2\neq 0\\
H(x_1,x_2,1)=0\\
\rho G(\rho,x_1)=rF(r,s)}}
 \e_{p} (Mr+Ns),
\end{equation}
where $\Sigma_1$ is the contribution from  $x_1-x_2=0$.
We will show that 
\begin{equation}\label{eq:sig2'}
\Sigma_2=O\left(p\gcd(p,M,N)\right).
\end{equation}
We may remove the condition $x_1-x_2\neq 0$ 
 in the second sum of \eqref{eq:bishop} at the expense of an additional error term $O(p)$.
Hence, on making the change of variables implicit in \eqref{eq:change}, we obtain 
\begin{equation}\label{eq:sig2}
\Sigma_2=\Sigma_1+T_p(M\bar h_1+N\bar h_2,-M\bar h_1+N\bar h_2)+O(p),
\end{equation}
where
for $\c=(c_1,c_2)\in \ZZ^2$ we set
$$
T_p(\c)=
\sum_{\substack{(x,y,z_1,z_2)\in \FF_p^4\\
H(z_1,z_2,1)=
K(z_1,x,y,1)=0}}
 \e_{p} (c_1x+c_2y),
$$
where $K$ is given by \eqref{eq:def-K}.
The estimate   \eqref{eq:sig2'} will follow on
combining  Lemma \ref{lem:S_p} with the second part of the  following result in \eqref{eq:sig2}.

\begin{lemma}\label{lem:T_p}
We have
$$
T_p(0,0)=p^2+O(p) \quad \mbox{and}
\quad
T_p(\c)=O(p\gcd(p,c_1,c_2)).
$$
\end{lemma}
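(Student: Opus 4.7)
The plan is to follow the template of Lemma~\ref{lem:S_p}: derive the first assertion from a Deligne-type point count on a projective surface, and the second from Hooley's method of moments combined with the Weil bound for curves. The new ingredient is the additional quadric constraint $H(z_1,z_2,1)=0$, which cuts the underlying variety by one further dimension.

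\emph{First assertion.} Homogenise $H$ to $\bar H(Z_1,Z_2,W)=Z_1^2+Z_1Z_2+Z_2^2+AW^2$ and pass to projective coordinates, writing
\[(p-1)T_p(0,0)=\#\{(z_1,z_2,x,y,w)\in\FF_p^5\setminus\{\mathbf 0\}:\bar H(z_1,z_2,w)=K(z_1,x,y,w)=0,\ w\neq 0\}.\]
The projective variety $V=V(\bar H)\cap V(K)\subset\PP^4_{\FF_p}$ is a complete intersection of dimension $2$ and degree $8$. A direct Jacobian calculation, using $p\nmid 6A$ and Lemma~\ref{lem:non}, confines the singular locus of $V$ to at most four isolated ordinary double points, all lying in $\{W=0\}$: these are the points $[0:0:X:Y:0]$ with $Y^4=X^4$, arising because the quadric $V(\bar H)$ is a rank-$3$ cone with apex along the line $L=\{Z_1=Z_2=W=0\}$. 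Resolving each $A_1$ singularity by a single blow-up produces a smooth projective surface $\tilde V$ with $b_1(\tilde V)=b_3(\tilde V)=0$ (by Lefschetz for $V$, preserved under blowing up points), and Lemma~\ref{lem:deligne} applied to $\tilde V$ gives $\#\tilde V(\FF_p)=p^2+O(p)$. Tracking the $O(p)$ contribution of the exceptional $\PP^1$'s yields $\#V(\FF_p)=p^2+O(p)$. Finally, $V\cap\{W=0\}\subset\PP^3$ is cut out by $Z_1^2+Z_1Z_2+Z_2^2=0$ and $Y^4=X^4$, a union of at most $8$ lines contributing $O(p)$ points, so $T_p(0,0)=p^2+O(p)$.

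\emph{Second assertion.} If $p\mid\gcd(c_1,c_2)$ we reduce to the first assertion. Otherwise I may assume $p\nmid c_1$ and apply Lemma~\ref{genhoo} with $n=4$, $k=2$, equations $H(z_1,z_2,1)=K(z_1,x,y,1)=0$, and phase $F(\x)=c_1x+c_2y$. Eliminating $x=c_1^{-1}(\tau-c_2y)$, the count $N_j(\tau)$ becomes the $\FF_{p^j}$-point count of the affine curve
\[C_\tau:\qquad H(z_1,z_2,1)=K\bigl(z_1,c_1^{-1}(\tau-c_2y),y,1\bigr)=0\]
in $(y,z_1,z_2)$-space. A Bertini/Hilbert-irreducibility argument, applied to the family $\{C_\tau\}_{\tau\in\AA^1}$ whose total space is the geometrically irreducible affine surface $V\cap\{W=1\}$, shows that $C_\tau$ is absolutely irreducible for all but $O(1)$ values of $\tau\in\FF_{p^j}$. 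For these generic $\tau$, Lemma~\ref{lem:weil} gives $N_j(\tau)=p^j+O(p^{j/2})$; for the exceptional $\tau$ the trivial bound $N_j(\tau)=O(p^j)$ suffices. Taking $N_j=p^j$ yields $\sum_\tau|N_j(\tau)-N_j|^2\ll p^{2j}$, so $\kappa=2$ in \eqref{hoo1} and $T_p(\c)\ll p$.

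\emph{Main obstacle.} The chief difficulty is geometric: $V(\bar H)$ is a rank-$3$ quadric cone, so $V$ is forced to acquire singularities along $L\cap V(K)$. Identifying these as $A_1$-type and carrying out the resolution carefully enough to obtain the sharper error $O(p)$ (rather than the $O(p^{3/2})$ afforded by bare Lang--Weil) is the crux of the first assertion. The analogous hurdle for the second assertion is verifying absolute irreducibility of the generic slice $C_\tau$, which is most delicate when the $y^4$-coefficient of its defining polynomial degenerates (namely $c_1^4\equiv c_2^4\pmod p$); such degeneracies can be absorbed into the $O(1)$ exceptional set of $\tau$.
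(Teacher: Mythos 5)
Your second-assertion argument (Hooley's method of moments, Hilbert irreducibility, Weil for curves) coincides with the paper's. For the first assertion you have genuinely departed from the paper's route, and in doing so you have in fact spotted and repaired a gap. The paper attempts to prove that $V=V(H)\cap V(K)\subset\PP^4$ is \emph{non-singular}, arguing that a Lagrange condition $\lambda\nabla H=\mu\nabla K$ forces $\lambda\mu\neq 0$ because ``$H$ and $K$ are non-singular.'' But in $\PP^4$ (with coordinates $Z_1,Z_2,X,Y,W$) the hypersurface $V(H)$ is a rank-$3$ quadric cone: $\nabla_{\PP^4}H=(2Z_1+Z_2,Z_1+2Z_2,0,0,2AW)$ vanishes identically along the apex line $L=\{Z_1=Z_2=W=0\}$, so the inference $\mu\neq 0$ fails there. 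As you observe, $L\cap V(K)$ consists of the points $[0:0:X:Y:0]$ with $Y^4=X^4$, and at each such point $\nabla H=0$ while $\nabla K=(0,0,-8X^3,8Y^3,0)\neq 0$, so the Jacobian has rank $1$ and these really are singular points of $V$; moreover the linear part of $K$ there is $8\zeta^3 y$ (writing $Y=\zeta+y$, $\zeta^4=1$), so after eliminating $y$ the tangent cone is the non-degenerate quadric $H(Z_1,Z_2,W)=0$ in three variables, i.e.\ an $A_1$ node, as you claim. Your resolution-of-singularities fix is therefore sound in substance. Two small caveats: Lemma~\ref{lem:deligne} as stated requires a non-singular \emph{complete intersection}, and $\tilde V$ is not one, so at that point you should appeal directly to Deligne's Weil I bound for smooth projective surfaces with $b_1=b_3=0$ and bounded $b_2$; and the absolute irreducibility used in the second assertion does not actually require smoothness of $V$ (a normal connected complete intersection suffices), so the gap does not propagate there.
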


\begin{proof}
The proof of this result is similar to our argument in Lemma~\ref{lem:S_p}.  
Let $q=p^j$ for $j\geq 1$.  We may clearly assume that $p\gg 1$, since the estimates are trivial otherwise.
We begin  with the first estimate,  converting the problem into one involving projective varieties by noting that $T_p(0,0)$ is equal to
$$
\frac{1}{p-1}
\#\left\{(x,y,z_1,z_2,w)\in \FF_p^5: 
\begin{array}{l}
H(z_1,z_2,w)=K(z_1,x,y,w)=0\\ 
w\neq 0
\end{array}
\right\},
$$
The desired conclusion will follow from Lemma \ref{lem:deligne}, provided we can show that $H=K=0$ defines a non-singular surface in $\PP_{\FF_q}^4$.  

Delaying this for the moment, we move to an analysis of 
 $T_p(\c)$ for general $\c\in \ZZ^2$, with
$p\nmid (c_1,c_2)$.  
Still under the assumption that the projective variety  $H=K=0$ is non-singular, 
it follows  that $H(z_1,z_2,1)=K(z_1,x,y,1)=0$ defines an absolutely irreducible affine variety over $\FF_q$.
We apply Lemma~\ref{genhoo} with $k=2$ and $n=4$. 
On assuming without loss of generality  that $c_1\neq 0$, 
  we must consider
$$
N_j(\tau)=\#\left\{(y,z_1,z_2)\in \FF_q^3:
\begin{array}{l}
H(z_1,z_2,1)=0\\
K(z_1,-c_1^{-1}c_2y+c_1^{-1}\tau,y,1)=0
\end{array}{}
\right\},
$$
for $\tau\in \FF_q$.
By Hilbert's irreducibility theorem there are $O(1)$ values of $\tau$ for which 
the equations in $N_j(\tau)$ fail to define an absolutely irreducible curve in $\AA_{\FF_q}^3$. 
For these we take $N_j(\tau)=O(q)$.
For the remaining $\tau$, Lemma \ref{lem:weil} yields
$
N_j(\tau)=q+O(q^{1/2}),
$
uniformly in $\tau$.
Taking $N_j=q$ in Lemma \ref{genhoo} therefore permits the choice 
$\kappa=2$ in \eqref{hoo1}, which leads to the claimed bound for $T_p(\c).$

It remains to show that the equations  $H=K=0$ produce a non-singular variety   in $\PP_{\FF_q}^4$.  
For this we consider the existence of a non-zero point $(Z_1,Z_2,X,Y,W)$ such that 
$$
H=K=0, \quad \lambda\nabla H=\mu \nabla K,
$$
with  $(\lambda,\nu)\neq (0,0).$  
We have already remarked that $H$ and $K$ are  non-singular over $\FF_q$.
Hence we must have $\lambda\mu\neq 0$ in any such solution. Moreover, $W\neq 0$ in any such solution, since for $W=0$ the equation for $K$ implies that $X=Y=0$ and the remaining constraints force $Z_1=Z_2=0$. Next we observe that 
$
\partial H/\partial Z_2=0,
$
in any solution.  Likewise, on replacing $K(Z_1,X,Y,W)$ by $K(Z_2,X,Y,W)$, we may adjoin to this the equation
$\partial H/{\partial Z_1}=0.
$
Finally, since $H=0$ and $W\neq 0$, an application of Euler's identity implies that 
$
{\partial H}/{\partial W}=0,
$
which is impossible since $H$ is non-singular. Hence there are no singular points, as claimed.
\end{proof}

Suppose now that $t\geq 3$  and
write $\x=(x_1,\dots,x_t)$. 
 Opening up
 $\nu_p(r,s)^t$ in $\Sigma_t$ gives
$$
\Sigma_t=
\sum_{\substack{(r,s,\x)\in  \FF_p^{t+2}\\
i\neq j \Rightarrow (x_i-x_j)H(x_i,x_j,1)=0\\
\rho G(\rho,x_1)=rF(r,s)}}
 \e_{p} (Mr+Ns),
$$
via \eqref{eq:H}. 
Let $\sigma(\x)$ denote the number of distinct elements in the set $\{x_1,\dots,x_t\}$. Clearly $1\leq \sigma(\x)\leq t$.
The contribution from those $(r,s,\x)$ for which $\sigma(\x)=1$ is 
 $\Sigma_1$. This event can only arise in one way.
 The contribution 
from  those $(r,s,\x)$ for which $\sigma(\x)=2$ is 
$\Sigma_2-\Sigma_1$, by \eqref{eq:bishop}.
There are $c_t$ ways in which this can arise, for an appropriate constant $c_t$ depending on $t$. 
Next, consider the contribution from 
$(r,s,\x)$ for which $\sigma(\x)=3$.
Suppose, for example, that $\x=(x,y,z,x,\dots,x)$ with 
$
(x-y)(x-z)(y-z)\neq 0.
$
In this case $x,y,z$ will satisfy 
$$
0=H(x,y,1)=H(x,z,1),
$$
whence in fact $x+y+z=0$.    In view of \eqref{eq:bishop}, the contribution from this case is therefore found to be 
$$
\sum_{\substack{(r,s,x,y)\in \FF_p^4\\
(x-y)(2x+y)(x+2y)\neq 0\\
H(x,y,1)=0\\
\rho G(\rho,x)=rF(r,s)}}
 \e_{p} (Mr+Ns)=\Sigma_2-\Sigma_1+O(p).
$$
This situation  arises in $d_t$ ways, say.
Finally, our argument shows that there can be no contribution from $(r,s,\x)$ for which 
$\sigma(\x)\geq 4$.
It follows that 
\begin{equation}\label{eq:vicar}
\Sigma_t=\left\{1-c_t-d_t\right\}
\Sigma_1+
\left\{c_t+d_t\right\}
\Sigma_2+O_t(p),
\end{equation}
for $t\geq 3$.
We are now ready to record the following  result, which summarises our investigation in this section.

\begin{lemma}\label{lem:sigma-final}
We have 
$
\Sigma_t(p;M,N)=O_t\left(p\gcd(p,M,N)\right)
$
for $t\geq 0$, and 
$$
\Sigma_t(p;0,0)=
\max\{1,t\}
p^2+O(p)
$$
for $0\leq t\leq 2$.
\end{lemma}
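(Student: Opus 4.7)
The statement is essentially a synthesis of the work already carried out in this section, so the plan is largely one of bookkeeping. First I would dispose of the uniform upper bound $\Sigma_t(p;M,N)\ll_t p\gcd(p,M,N)$ by treating the small cases $t=0,1,2$ separately through the identities \eqref{eq:sig-0}, \eqref{eq:sig1'} and \eqref{eq:sig2'} that have already been derived; for $t=0$ the estimate is in fact exact since $\Sigma_0$ vanishes unless $p\mid\gcd(M,N)$, in which case it equals $p^2$, and this is absorbed in $p\gcd(p,M,N)$. For $t\geq 3$ I would invoke the reduction \eqref{eq:vicar}, which writes $\Sigma_t$ as $\{1-c_t-d_t\}\Sigma_1+\{c_t+d_t\}\Sigma_2+O_t(p)$ with constants depending only on $t$, and simply substitute the bounds already in hand for $\Sigma_1$ and $\Sigma_2$.

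For the asymptotic at $(M,N)=(0,0)$, specialising $M=N=0$ kills the inner exponentials. When $t=0$ this gives $\Sigma_0(p;0,0)=p^2$ from \eqref{eq:sig-0}. When $t=1$, the change of variables in \eqref{eq:sig1} identifies $\Sigma_1(p;0,0)$ with $S_p(0,0)$, and the first assertion of Lemma \ref{lem:S_p} delivers $p^2+O(p)$. When $t=2$, substituting $M=N=0$ in \eqref{eq:sig2} yields $\Sigma_2(p;0,0)=\Sigma_1(p;0,0)+T_p(0,0)+O(p)$, so combining the $t=1$ conclusion with the first assertion of Lemma \ref{lem:T_p} gives $2p^2+O(p)$, in agreement with $\max\{1,2\}p^2$.

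All the genuinely hard work, namely the non-singularity of $K=0$ (Lemma \ref{lem:non}) and of the complete intersection $H=K=0$, together with the square-root cancellation on hyperplane slices via Deligne, Weil and Hooley's method of moments (Lemmas \ref{genhoo}, \ref{lem:deligne}, \ref{lem:weil}), has already been carried out in Lemmas \ref{lem:S_p} and \ref{lem:T_p}. I therefore do not anticipate any substantive obstacle. The only small points to bear in mind are that the combinatorial coefficients $1-c_t-d_t$ and $c_t+d_t$ in \eqref{eq:vicar} must be absorbed into the $O_t(\cdot)$ notation, and that \eqref{eq:vicar} would \emph{not} yield a clean main term at $M=N=0$ for $t\geq 3$, which is precisely why the second assertion of the lemma is restricted to $0\leq t\leq 2$.
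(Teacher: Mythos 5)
Your proposal is correct and follows the paper's own proof essentially verbatim: the uniform bound is deduced from \eqref{eq:sig-0}, \eqref{eq:sig1'}, \eqref{eq:sig2'} and \eqref{eq:vicar}, and the asymptotics at $(M,N)=(0,0)$ are read off from \eqref{eq:sig-0}, \eqref{eq:sig1} with Lemma \ref{lem:S_p}, and \eqref{eq:sig2} with Lemmas \ref{lem:S_p} and \ref{lem:T_p}. The closing remark about why the second assertion stops at $t=2$ is a sensible observation but not needed for the argument.
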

\begin{proof}
The first part follows from 
\eqref{eq:sig-0}, \eqref{eq:sig1'}, \eqref{eq:sig2'} and \eqref{eq:vicar}. 
Turning to the second part, with   $M=N=0$, the case $t=0$ follows directly from 
\eqref{eq:sig-0} and  the case  $t=1$ follows from \eqref{eq:sig1} and  Lemma \ref{lem:S_p}. Finally, the case $t=2$ follows from \eqref{eq:sig2} and Lemmas~\ref{lem:S_p} and \ref{lem:T_p}. 
\end{proof}

\section{The exponential sum $\Phi(\rho;M,N)$}

Recall the definition \eqref{eq:sig} of the exponential sum $\Phi(\rho;M,N)$ for $\rho\in \NN$ and $M,N\in \ZZ$.
It will be convenient to define
\begin{equation}\label{eq:Delta}
\Delta(M,N)=h_2^2M^2+h_1^2N^2.
\end{equation}
Suppose that $\rho_1,\rho_2$ are coprime integers and let $\bar{\rho}_1,\bar{\rho}_2\in \ZZ$ be such that $\rho_1\bar{\rho}_1+\rho_2\bar{\rho}_2=1$. 
Then arguing as in the proof of Lemma	\ref{lem:fact} it is easy to see that 
\begin{equation}\label{eq:melt}
\Phi(\rho_1\rho_2;M,N)=
\Phi(\rho_1;\bar\rho_2M,\bar\rho_2N)
\Phi(\rho_2;\bar\rho_1M,\bar\rho_1N).
\end{equation}
This renders it sufficient to study $$\Phi(p^k)=\Phi(p^k;M,N)
=
 \sumstar_{\substack{(r,s) \bmod{p^k}\\ 
F(r,s)\equiv 0\bmod{p^k}
}}  \e_{p^k} (Mr+Ns)
$$
for a given prime power $p^k$, where $F$ is given by \eqref{eq:F}.

We begin by examining the case $k=1$. Suppose that $p>2$ and that  $\ell,m\in \ZZ$, with $p\nmid \ell$. 
We will use the familiar formula 
\begin{equation}
\label{eq:gauss}
\sum_{x \bmod{p}}\e_{p}(\ell x^2+mx)=
\varepsilon_p\sqrt{p} \left(\frac{\ell}{p}\right) \e_{p}(-\overline{4\ell}m^2),
\end{equation}
for the Gauss sum, where 
$
\varepsilon_p=1$ (resp.\ $\ve_p=i$)
if 
$p\equiv 1 \bmod{4}$ (resp.\ 
if $p\equiv 3 \bmod{4}$).
We may now establish the following result.

\begin{lemma}\label{lem:k=1}
We have $\Phi(p)\ll p^{1/2}\gcd(p,\Delta(M,N))^{1/2}$.
\end{lemma}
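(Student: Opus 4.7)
The plan is to view $\Phi(p)$ as an exponential sum over the plane cubic curve $C : F(r,s) \equiv 0 \bmod{p}$ and to invoke Hooley's method of moments (Lemma~\ref{genhoo}) together with Lemma~\ref{lem:weil}, in the same spirit as the proofs of Lemmas~\ref{lem:S_p} and \ref{lem:T_p}. The coprimality condition $p \nmid rs$ removes only $O(1)$ points from $C(\FF_p)$ (at most three values of $s$ satisfy $F(0,s)\equiv 0$, and $F(r,0) = c$ contributes points only when $p \mid c$), so I may replace $\Phi(p)$ by the unrestricted curve sum $\sum_{(r,s) \in C(\FF_p)} \e_p(Mr + Ns)$ up to a harmless error. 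The Gauss-sum formula \eqref{eq:gauss} could be used first to evaluate the inner quadratic sum over $r$ as a Sali\'e sum, but after simplification this only returns the same character sum over $C$, so a direct application of the moment method is more streamlined.

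To apply Lemma~\ref{genhoo} with $n=2$, $k=1$, $G_1 = F(r,s)$, and phase $Mr + Ns$, I would first assume $(M,N) \not\equiv (0,0) \bmod{p}$. For each $\tau \in \FF_{p^j}$, the line $Mr + Ns = \tau$ meets the cubic $C$ in at most three points, so $N_j(\tau) \ll 1$; Lemma~\ref{lem:weil} gives $\sum_\tau N_j(\tau) = |C(\FF_{p^j})| = p^j + O(p^{j/2})$, and the second moment
$$
\sum_\tau N_j(\tau)^2 = \#\bigl\{((r_1,s_1),(r_2,s_2)) \in C \times C : M(r_1-r_2) + N(s_1-s_2) = 0\bigr\}
$$
counts the $\FF_{p^j}$-points on a one-dimensional subvariety of $\AA^4$ (the hyperplane does not contain the irreducible surface $C \times C$ when $(M,N) \not\equiv (0,0)$), which is $O(p^j)$ by a component-wise application of Lemma~\ref{lem:weil}. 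Taking $N_j = |C(\FF_{p^j})|/p^j$, condition \eqref{hoo1} holds with $\kappa = 1$, yielding $\Phi(p) \ll p^{1/2}$. In the remaining case $(M,N) \equiv (0,0) \bmod{p}$, the trivial bound $|\Phi(p)| \leq |C(\FF_p)| \ll p$ applies, and $p \mid \gcd(M,N)$ forces $p \mid \Delta(M,N)$, so $p^{1/2}\gcd(p,\Delta)^{1/2} = p$, as required.

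The main obstacle is verifying that $C$ is absolutely irreducible modulo $p$. A partial-derivative calculation on the projectivisation $h_2^2 S^3 + h_1^2 R^2 S + 2a S T^2 + c T^3$ shows that $C$ is smooth, and hence absolutely irreducible, outside the finitely many primes dividing $c \cdot (27 h_2^2 c^2 + 32 a^3)$; these bad primes contribute only $O(1)$ and can be absorbed into the implied constant or by slightly enlarging the set $\cP$. The genuinely degenerate sub-case $p \mid c$ (equivalently $p \mid 2b$) makes $F$ factor as $s \cdot (h_2^2 s^2 + h_1^2 r^2 + 2a)$; the line $s = 0$ is killed by the star condition, and the smooth affine conic component yields to the same moment argument, completing the proof in all cases.
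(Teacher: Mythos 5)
Your overall strategy --- Hooley's method of moments for the cubic, a separate conic argument for $p\mid c$, and the trivial bound for $(M,N)\equiv(0,0)$ --- is close in spirit to the paper's. Where the paper treats the degenerate case $p\mid\gcd(c,M)$ by completing to Gauss sums and invoking the Weil bound for Kloosterman sums, you instead factor $F=s\cdot(h_2^2s^2+h_1^2r^2+2a)$ and run the moment method on the conic; that is a legitimate alternative once the conic is known to be smooth (which needs $p\nmid 2a h_1 h_2$).

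However, there is a genuine gap. Your central claim --- that for $(M,N)\not\equiv(0,0)\bmod p$ the line $Mr+Ns=\tau$ meets $C$ in at most three points, so $N_j(\tau)\ll1$ and hence $\kappa=1$ giving $\Phi(p)\ll p^{1/2}$ --- is false in the degenerate configurations that the $\gcd(p,\Delta(M,N))^{1/2}$ factor in the statement is designed to absorb. A concrete failure: take $p\mid h_1$, $p\mid M$, $p\nmid N$. Then $(M,N)\not\equiv(0,0)$, but $F(r,s)\equiv h_2^2s^3+2as+c\bmod p$ is independent of $r$, so $C\bmod p$ is a union of up to three \emph{vertical} lines $\{s=s_0\}$, while the constraint $Mr+Ns=\tau$ becomes the \emph{horizontal} line $s=N^{-1}\tau$. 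For the up-to-three values $\tau$ with $g(N^{-1}\tau)\equiv0$, that horizontal line lies entirely inside $C$ and $N_j(\tau)=p^j$. The second moment is then $\gg p^{2j}$, forcing $\kappa=2$, and indeed $\Phi(p)=(p-1)\sum_{s_0}\e_p(Ns_0)$ genuinely has size $\asymp p$, not $p^{1/2}$. Note that $p\mid h_1$ and $p\mid M$ force $p\mid\Delta(M,N)$, so the lemma's bound $p^{1/2}\gcd(p,\Delta)^{1/2}=p$ is still met, but your argument as written would output the false $p^{1/2}$.

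The cure, and the thing the paper does that you skip, is to first establish the crude bound $|\Phi(p)|\ll p$ for all $p$ outside a bounded set of primes dividing $2\gcd(a,c)$ (the paper proves $|\Phi(p)|\leq 3p$ by an elementary count), and then \emph{reduce to} $p\nmid 2\Delta(M,N)\gcd(a,c)$ before running the moment method. Reducing only to $(M,N)\not\equiv(0,0)$ is too weak: $p\mid\Delta(M,N)$ is strictly weaker than $p\mid\gcd(M,N)$ when $-h_1^2/h_2^2$ is a square mod $p$, or when $p\mid h_1h_2$. Once the reduction to $p\nmid\Delta$ is in place, the leading coefficient $M^{-2}\Delta(M,N)$ of the eliminated cubic in $s$ is nonzero and the rest of your moment argument is fine. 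Your handling of $p\mid c$ via the conic is also fine after the reduction, since $p\nmid\gcd(a,c)$ and $p\nmid h_1h_2$ together ensure the conic has full rank.
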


\begin{proof}
Recall that $(a,b)\neq (0,0)$. In view of 
the  bound $|\Phi(p)|\leq p^2$, we may assume that $p\nmid 2\gcd(a,c)$. Next we observe that
$$
|\Phi(p)|\leq \sumstar_{r\bmod{p}}
\hspace{-0.2cm}
\#\{
s\bmod{p}: h_2^2s^3+(h_1^2r^2+2a)s+c\=0\bmod{p}\}\leq 
3p,
$$ 
for $p\nmid 2\gcd(a,c)$.
When $p\nmid h_2$ this follows since there are 
 at most $3$ solutions of  the congruence $s^3+As+B\=0 \bmod{p}$,
 for given $A,B\in \ZZ$. 
 When $p\mid h_2$, but $p\nmid (h_1^2r^2+2a)$, there is a unique choice of $s$ for given  $r$, which is satisfactory. Finally, when   $p\mid h_2$ and  $p\mid h_1^2r^2+2a$ we must have $p\nmid h_1$, 
 since $p\nmid 2\gcd(a,c)$.  Then 
 there at most $p$ choices for $s$ but only at most $2$
for $r$, which is also satisfactory. 
 
 We may  assume that $p\nmid 2\Delta(M,N)\gcd(a,c)$ for the remainder of the proof.
Suppose that $p\mid h_1.$  In particular 
$\Delta(M,N)\=h_2^2M^2 \bmod{p}$ and so $p\nmid h_2M$.
We have 
\begin{align*}
\Phi(p)
&=
 \sumstar_{\substack{(r,s) \bmod{p}\\
h_2^2s^3+2as+c\equiv 0\bmod{p}
}}  \e_{p} (Mr+Ns)\\
&= 
c_p(M)
 \sumstar_{\substack{s\bmod{p}\\ 
h_2^2s^3+2as+c\equiv 0\bmod{p}
}}  \e_{p} (Ns),
\end{align*}
where $c_p(M)$ is the Ramanujan sum.
Our argument in the preceding paragraph shows that $|\Phi(p)|\leq 3\gcd(p,M)=3$ when $p\mid h_1$, which  is satisfactory for the lemma. 

Suppose next that $p\mid \gcd(c,M)$.
Then it follows that  $p\nmid 2ah_1N$, since $p\nmid 2\Delta(M,N)\gcd(a,c)$. Replacing $h_1r$ by $r$ and using additive characters to detect the congruence, 
we have 
\begin{align*}
\Phi(p)
&=
 \sumstar_{\substack{(r,s) \bmod{p}\\
r^2\=-h_2^2s^2-2a\bmod{p}
}}  \e_{p} (Ns)\\
&=
\frac{1}{p}
\sum_{\ell\bmod{p}}
\hspace{0.1cm}
 \sumstar_{\substack{(r,s) \bmod{p}}}
 \e_p\left(\ell(
 r^2+h_2^2s^2+2a)
+Ns\right).
\end{align*}
The contribution to the sum from $ \ell\=0\bmod{p}$ is easily seen to be $O(1)$.  Moreover, 
we may assume that $p\nmid h_2$ in the remaining sum, else we get $\Phi(p)=O(1)$ overall. 
Replacing $h_2s$ by $s$, we get
\begin{align*}
\Phi(p)
&=
\frac{1}{p}
\hspace{0.1cm}
\sumstar_{\ell\bmod{p}}
\hspace{0.1cm}
 \sumstar_{\substack{(r,s) \bmod{p}}}
 \e_p\left(\ell(
 r^2+s^2+2a)
+\bar{h_2}Ns\right) +O(1)\\
&=
\frac{1}{p}
\hspace{0.1cm}
\sumstar_{\ell\bmod{p}}
 \e_p(2a\ell)
 \sumstar_{\substack{r \bmod{p}}}
 \e_p(\ell r^2)
 \sumstar_{\substack{s \bmod{p}}}
 \e_p(\ell s^2
+\bar{h_2}Ns) +O(1).
\end{align*}
Applying \eqref{eq:gauss}, we see that 
$$
\sumstar_{\ell\bmod{p}}
\e_p(\ell r^2)=
\varepsilon_p\sqrt{p} \left(\frac{\ell}{p}\right) -1
$$
and 
$$
\sumstar_{\substack{s \bmod{p}}}
\e_p(\ell s^2
+\bar{h_2}Ns)
=\varepsilon_p\sqrt{p} \left(\frac{\ell}{p}\right) \e_{p}(-\overline{4\ell h_2^2}N^2)-1.
$$
Hence  
\begin{align*}
\Phi(p)
&=
\ve_p^2
\hspace{0.1cm}
\sumstar_{\ell\bmod{p}}
 \e_p(2a\ell)\e_{p}(-\overline{4\ell h_2^2}N^2)
+O(p^{1/2}).
\end{align*}
But this is $O(p^{1/2})$
by the Weil bound for the Kloosterman sum.
This is satisfactory and so we can henceforth assume that 
$p\nmid \gcd(c,M)$ and 
$p\nmid 2h_1\Delta(M,N)\gcd(a,c)$.

We have 
\begin{align*}
\Phi(p)
&=
 \sum_{\substack{(r,s)\in \FF_p^2 \\
F(r,s)=0
}}  \e_{p} (Mr+Ns) +O(1),
\end{align*}
with $F$ given by \eqref{eq:F}.
We will use Lemma \ref{genhoo} to estimate the sum, with $k=1$ and $n=2$.
Let $j\geq 1$ and put $q=p^j$. Then for $\tau\in \FF_q$ we must consider
$$
N_j(\tau)=\#\{ (r,s)\in \FF_q^2: F(r,s)=0, ~Mr+Ns=\tau \},
$$
where we reduce $M$ and $N$ modulo $p$ and view them as elements of $\FF_q$.
If $M\neq 0$ then, on recalling our expression \eqref{eq:F} for $F$,  we eliminate $r$ to get
$$
N_j(\tau)=\#\{ s\in \FF_q:   g(s)=0 \},
$$
where $g$ is a  polynomial of degree $3$ with non-zero leading  coefficient $h_2^2+M^{-2}h_1^2N^2$.
Hence  $N_j(\tau)\leq 3$ in this case. Suppose next that $M=0$. 
In particular $c\neq 0$ and  $N\neq 0$. We may eliminate $s$ to get
$$
N_j(\tau)=\#\{ r\in \FF_q:   h_1^2N^{-1}\tau r^2+h_2^2N^{-3}\tau^3+2aN^{-1}\tau+c=0 \}.
$$
Hence $N_j(\tau)\leq 2$ if $\tau\neq 0$ and  $N_j(0)=0$.
Taking $N_j=0$ in Lemma~\ref{genhoo} therefore allows us to take $\kappa=1$ in \eqref{hoo1}, whence
$\Phi(p)\ll p^{1/2}$, as required to conclude the proof of the lemma.
\end{proof}

It remains to consider the general case $k\geq 2$.
It will be useful to collect together some basic estimates for the number of solutions to various polynomial congruences. Let $\nu\geq 1$ and let $A,B,C,D\in \ZZ$.
Beginning with quadratic congruences, we observe that 
$$
\#\{x\bmod{p^\nu}: x^2+D \equiv0 \bmod{p^\nu}\}\leq 2 p^{\min\{v_p(D),\nu\}/2}.
$$
Let $\xi\geq 0$ and assume that $2\xi\leq \nu$.
It follows from this that 
\begin{equation}\label{eq:quad}
\begin{split}
\#\{x\bmod{p^\nu}: p^{2\xi} x^2&+D \equiv0 \bmod{p^\nu}\}\\
&\begin{cases}
\leq 2 p^{\xi+\min\{v_p(D),\nu\}/2}, &\mbox{if $2\xi\leq v_p(D)$,}\\
=0, &\mbox{otherwise.}
\end{cases}
\end{split}
\end{equation}
Next, if $p\nmid \gcd(A,B,C)$, we have 
\begin{equation}\label{eq:hux}
\#\{x \bmod{p^\nu}: Ax^3+Bx+C\=0\bmod{p^\nu}\}\leq 3 p^{v_p(\delta(A,B,C))/2},
\end{equation}
where $\delta(A,B,C)=-(4AB^3+27A^2C^2)$ is the underlying discriminant.
This is established by Huxley \cite{hux}, for example. 
We are now ready to establish the following result.

\begin{lemma}\label{lem:ab-not0}
Suppose that $k\geq 2$.  Then 
$
\Phi(p^k)\ll 
p^{k} \gcd(p^{[k/2]},h_1).
$
\end{lemma}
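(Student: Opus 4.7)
The plan is to view the congruence $F(r,s) \equiv 0 \bmod{p^k}$, where $F(u,v) = h_1^2 v u^2 + D_v$ with $D_v := h_2^2 v^3 + 2av + c$, as a quadratic in $u$ for each fixed $v = s$ coprime to $p$. Writing $j = v_p(h_1)$, the leading coefficient of this quadratic has $p$-adic valuation $2j$, and the analysis divides into two regimes according as $2j \geq k$ or $2j < k$.

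When $2j \geq k$, the quadratic term vanishes modulo $p^k$, so the congruence reduces to $D_s \equiv 0 \bmod{p^k}$, independent of $r$. Hence the sum factorises as
\[
\Phi(p^k) = c_{p^k}(M) \cdot \sumstar_{\substack{s\bmod{p^k}\\ D_s \equiv 0\bmod{p^k}}} \e_{p^k}(Ns),
\]
where $c_{p^k}(M)$ is a Ramanujan sum of modulus at most $p^k$. I bound the remaining $s$-sum by the number of admissible $s$, which I control via the Huxley estimate \eqref{eq:hux} applied to the cubic $D_v$ in $v$. Combined with $j \geq [k/2]$ (forced by $2j \geq k$), this produces the desired bound $p^k \cdot p^{[k/2]}$.

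When $2j < k$, for each $s$ coprime to $p$ I view the congruence as a quadratic in $r$ whose leading coefficient $h_1^2 s$ has valuation exactly $2j$. Applying \eqref{eq:quad}, solutions with $r$ coprime to $p$ arise only when $v_p(D_s) = 2j$ exactly: the case $v_p(D_s) < 2j$ is incompatible, and $v_p(D_s) > 2j$ forces $p \mid r$. In the admissible case the $r$-solutions form at most two arithmetic progressions $\{r_0 + t p^{k-2j} : 0 \leq t < p^{2j}\}$ modulo $p^k$, so the inner sum $\sum_r \e_{p^k}(Mr)$ decomposes as a product of at most two phases with the complete sum $\sum_{t\bmod{p^{2j}}} \e_{p^{2j}}(Mt) = p^{2j}\mathbf{1}[p^{2j}\mid M]$, and thus has magnitude at most $2p^{2j}$. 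Summing over $s$ with $v_p(D_s) \geq 2j$, whose count is bounded by Huxley's estimate \eqref{eq:hux} together with standard Hensel-lifting, produces $\Phi(p^k) \ll p^{k+v_p(\delta)/2}$, where $\delta = -4h_2^2(8a^3+27b^2)$ is the discriminant of $D_v$.

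The main obstacle is the dependence on $p^{v_p(\delta)/2}$: when $p \nmid h_2$ this factor is $O_f(1)$ and absorbed into the implied constant, but when $p \mid h_2$ it inflates by $p^{v_p(h_2)}$, potentially exceeding the target $p^j$. To handle this I would separately treat $p \mid h_2$ by observing that $D_v$ reduces to the linear polynomial $2av+c$ modulo $p^{2v_p(h_2)}$, which admits at most one lift to a solution mod $p^k$ in the non-degenerate case; the fully degenerate subcase (where $D_s \equiv 0 \bmod{p^k}$ for every $s$) forces $p^k \mid c = 2b/h_2$, hence $p^k \leq 2|b|$, an $f$-dependent constant absorbable into the implied constant. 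The exceptional geometric degeneracy $8a^3+27b^2 = 0$ would need a short separate argument, appealing directly to the factorisation of $f$ that it enforces.
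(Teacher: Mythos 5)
Your approach has the same skeleton as the paper's: both treat $F(r,s)\equiv 0\bmod p^k$ as a quadratic in $r$ with leading coefficient $h_1^2s$ (valuation $2\xi_1$), split on $2\xi_1\geq k$ versus $2\xi_1<k$, bound the number of $r$ per fixed $s$ by the valuation of the constant term, and reduce to counting $s\bmod p^\nu$ with $D_s\equiv 0\bmod p^\nu$. So the strategy is right. But there are genuine gaps in the $s$-counting, which is the crux of the lemma and is what the paper's analysis of $M(\nu)$ (with $M(\nu)\ll 1$ if $\Delta:=8a^3+27b^2\neq 0$ and $M(\nu)\ll p^{[\nu/2]}$ if $\Delta=0$) supplies.

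Concretely: (i) Your Case~2 bound $\Phi(p^k)\ll p^{k+v_p(\delta)/2}$ with $\delta=-4h_2^2(8a^3+27b^2)$ has $v_p(\delta)/2\geq \xi_2=v_p(h_2)$, which is a priori unrelated to the target exponent $\xi_1$; the reconciliation requires the observation that $h_2\mid 2b$, so when $b\neq 0$ and $p\mid h_2$ one has $p^{\xi_2}\leq 2|b|=O_f(1)$. That observation is only latent in your $c=2b/h_2$ remark and is never used to bound $\xi_2$. (ii) Your ``linear reduction'' fix for $p\mid h_2$ handles $p\nmid 2a$ and the extreme degeneracy $p^k\mid c$, but misses the intermediate case $p\mid\gcd(2a,c)$ with $p^k\nmid c$. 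The paper deals with this by setting $\gamma=\min\{2\xi_2,v_p(2a),v_p(c)\}$, dividing out $p^\gamma$ to restore unit content, and only then applying Huxley to a cubic whose discriminant $d=4p^{-3\gamma}\Delta$ has $p^{v_p(d)}=O_f(1)$. Without this content reduction, \eqref{eq:hux} is not applicable in these cases. (iii) The case $\Delta=0$ is deferred entirely in your proposal, yet this is precisely where Huxley degenerates and the $M(\nu)\ll p^{[\nu/2]}$ bound must come from elsewhere: the paper shows $(a,b)=(-6t^2,8t^3)$, factors $h_2D_s=(h_2s-2t)^2(h_2s+4t)$, and counts roots directly. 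This is not a short afterthought but the source of the $p^{[k/2]}$ in the lemma's statement, so it needs to be carried out. Finally, the $b=0$ (hence $c=0$) case, where $D_s$ effectively degenerates to the quadratic $h_2^2s^2+2a$ and the paper invokes \eqref{eq:quad} instead of \eqref{eq:hux}, is also not addressed.
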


\begin{proof}
Define the integer  $\Delta=2^3a^3+3^3b^2$.   Our argument will differ according to whether or not $\Delta$ vanishes. 
 Throughout our argument we put
 $\xi_i=v_p(h_i)$, for $i=1,2$, with $h_i=p^{\xi_i}h_i'$.

Our starting point is an analysis of the 
quantity 
$$
M(\nu)=\#\{ s\bmod{p^\nu}: p\nmid s, ~ g(s)\=0\bmod{p^\nu}\},
$$
for $\nu\geq 1$, where
$g(s)=p^{2\xi_2}h_2'^2s^3+2as+c$. 
We will show that 
\begin{equation}\label{eq:M}
M(\nu)\ll 
\begin{cases}
1, & \mbox{if $\Delta\neq 0$,}\\
p^{[\nu/2]}, & \mbox{if $\Delta= 0$.}
\end{cases}
\end{equation}
Suppose first that $\Delta=0$. 
Then we must have 
$(a,b)=(-6t^2,8t^3),$
for some  non-zero integer $t$. 
In particular $h_2=O(1)$ and we observe that 
\begin{align*}
M(\nu)&\leq 
\#\{s\bmod{p^\nu}: 
(h_2s+4t)(h_2s-2t)^2\=0 \bmod{p^\nu}\}\\
&\ll p^{[\nu/2]},
\end{align*}
as required. 

Turning to the case $\Delta\neq 0$, we suppose that  $b=0$. Then  $a\neq 0$ 
and we now have 
\begin{align*}
M(\nu)
&\leq \#\{ s\bmod{p^\nu}: p^{2\xi_2}h_2'^2s^2+2a
\=0\bmod{p^\nu}\}.
\end{align*}
If $2\xi_2\leq \nu$, 
it now follows from \eqref{eq:quad} that 
$
M(\nu)\leq 2p^{v_p(2a)} \ll 1.
$
If $2\xi_2> \nu$, then we trivially  have $M(\nu)\ll 1$ since then $\nu\leq v_p(2a)$.
We  now suppose that $b\neq 0$. In particular $h_2=p^{\xi_2}h_2'=O(1)$.
Write 
$$
\gamma=\min\{2\xi_2,v_p(2a),v_p(c)\},
$$ 
so that also $p^\gamma=O(1)$.
We may assume   that $\gamma< \nu$, since otherwise 
 $M(\nu)\ll 1$ 
is trivial. 
Let us write $2a=p^{\gamma}a'$ and $c=p^{\gamma}c'$, 
so that 
$$
M(\nu)\leq p^{\gamma}\#\{ s\bmod{p^{\nu-\gamma}}: 
p^{2\xi_2-\gamma}h_2'^2s^3+a's+c'
\=0\bmod{p^{\nu-\gamma}}\}.
$$
Since the  cubic polynomial now has content coprime to $p$, we 
may  apply \eqref{eq:hux} to deduce that 
$
M(\nu)\leq 3 p^{\gamma/2+\xi_2+v_p(d)/2}$,
where $d$ is the integer
\begin{align*}
d&=2^2a'^3+3^3p^{2\xi_2-\gamma}h_2'^2c'^2\\
&=p^{-3\gamma} \{2^5a^3+3^3h_2^2c^2\}\\
&=4p^{-3\gamma} \Delta.
\end{align*}
Since $\Delta\neq 0$ it follows that  $p^{v_p(d)}\ll 1$ and so
$M(\nu)\ll 1$, as required to complete the proof of  \eqref{eq:M}.

We are now ready  to establish the bound for $\Phi(p^k)$ in the lemma,  observing that 
$|\Phi(p^k)|$ is at most 
\begin{align*}
\#\{r,s\bmod{p^k}: p\nmid rs, ~h_2^2 s^3+(h_1^2r^2+2a)s+c\=0 \bmod{p^k}\}.
\end{align*}
If $2\xi_1\geq k$ then it follows from \eqref{eq:M} that 
$\Phi(p^k)\ll p^{k+[k/2]}$, which is satisfactory.  Alternatively, if $2\xi_1<k$, we deduce from \eqref{eq:quad} and \eqref{eq:M} that 
\begin{align*}
|\Phi(p^k)|
&\leq 2
\sumstar_{\substack{s\bmod{p^k}\\ 2\xi_1\leq v_p(g(s))}}
p^{\xi_1+\min\{v_p(g(s)), k\}/2 }\\
&\leq 2p^{\xi_1}
\sum_{2\xi_1\leq j<k} p^{k-j/2} M(j)
+2p^{\xi_1+k/2}M(k)\\
&\ll p^{k+\xi_1},
\end{align*}
which is also satisfactory.
This completes the proof of the lemma.
\end{proof}

We now collect together our work so far to deduce a general bound 
for the exponential sum $\Phi(\rho;M,N)$ using the multiplicativity property \eqref{eq:melt}.
Given $\rho\in \NN$, we will write 
$\rho=uvw^2$, where
\begin{equation}\label{eq:reading}
u=\prod_{\substack{p\| \rho}} p, \quad 
v=\prod_{\substack{p^j\| \rho\\ j\geq 2, ~2\nmid j}} p.
\end{equation}
Clearly $v$ divides $w$.
Drawing together 
 Lemmas \ref{lem:k=1} and \ref{lem:ab-not0} we easily arrive at the following result.

\begin{lemma}\label{lem:rho}
There exists an absolute constant $A>0$ such that 
$$
\Phi(\rho;M,N)\leq A^{\omega(\rho)} u^{1/2}vw^2\gcd(u,\Delta(M,N))^{1/2}\gcd(w,h_1),
$$
where $\Delta$ is given by \eqref{eq:Delta}.
\end{lemma}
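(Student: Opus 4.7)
The plan is to apply the multiplicativity relation \eqref{eq:melt} iteratively to reduce the problem to a product of bounds on the prime power sums $\Phi(p^k;\cdot,\cdot)$, and then combine Lemmas~\ref{lem:k=1} and \ref{lem:ab-not0}. First I would factor $\rho=\prod_{p^k\|\rho}p^k$ and iterate \eqref{eq:melt} to get
\[
\Phi(\rho;M,N)=\prod_{p^k\|\rho}\Phi(p^k;\bar\rho_p M,\bar\rho_p N),
\]
where each $\bar\rho_p$ is an integer coprime to $p$ arising from the Bezout step. The key observation here is that $\Delta(\bar\rho_p M,\bar\rho_p N)=\bar\rho_p^{\,2}\Delta(M,N)$, so $\gcd(p,\Delta(\bar\rho_p M,\bar\rho_p N))=\gcd(p,\Delta(M,N))$, and thus the specific choice of inverses will not alter the final bound.

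Next I would apply Lemma~\ref{lem:k=1} at each prime $p\|\rho$, producing a factor $\ll p^{1/2}\gcd(p,\Delta(M,N))^{1/2}$, and Lemma~\ref{lem:ab-not0} at each prime power $p^k\|\rho$ with $k\geq 2$, producing a factor $\ll p^k\gcd(p^{[k/2]},h_1)$. The product over primes $p\|\rho$ immediately gives $u^{1/2}\gcd(u,\Delta(M,N))^{1/2}$, the gcd being multiplicative across pairwise coprime prime moduli by CRT.

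The bulk of the bookkeeping then consists of regrouping the contributions from $p^k\|\rho$ with $k\geq 2$ to match the decomposition $\rho=uvw^2$ of \eqref{eq:reading}. For an even exponent $k=2m$, the prime contributes $p^{2m}=(p^m)^2$ to $w^2$; for an odd exponent $k=2m+1\geq 3$, it contributes one factor $p$ to $v$ and a factor $(p^m)^2$ to $w^2$. Taking the product of the $p^k$ factors over all such primes therefore gives exactly $vw^2$. Simultaneously, the product of the companion terms $\gcd(p^{[k/2]},h_1)$ over distinct primes combines, again by CRT, into a single $\gcd(w,h_1)$, since $[k/2]=m$ in both the even and odd cases and $w=\prod p^m$.

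Finally, the $O(\cdot)$ constants implicit in Lemmas~\ref{lem:k=1} and \ref{lem:ab-not0} enter once per prime factor of $\rho$, so their total effect is absorbed into a factor $A^{\omega(\rho)}$ for some absolute $A>0$. This assembles into the claimed estimate. There is no genuine obstacle here: the argument is a direct combination of the two prime-power bounds, and the only subtlety is matching the exponents $[k/2]$ and the even/odd split of $k$ to the definitions of $u$, $v$ and $w$ in \eqref{eq:reading}.
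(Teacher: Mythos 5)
Your argument is correct and is precisely the ``easily arrive at'' step the paper leaves to the reader: iterate the multiplicativity \eqref{eq:melt} over the prime powers dividing $\rho$, apply Lemma~\ref{lem:k=1} at primes $p\|\rho$ and Lemma~\ref{lem:ab-not0} at higher prime powers, and match the resulting factors to the decomposition $\rho=uvw^2$ of \eqref{eq:reading}. Your observation that $\gcd(p,\Delta(\bar\rho_p M,\bar\rho_p N))=\gcd(p,\Delta(M,N))$, since $\Delta$ is a quadratic form and $\bar\rho_p$ is a unit mod $p$, correctly disposes of the only point of possible concern.
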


\section{Conclusion}\label{s:conclusion}

It is now time to bring everything together in \eqref{eq:Sij}. 
Using the basic estimate
$[\theta]=\theta+O(1)$, the contribution to $S_{i,j}$ from the term $m=n=0$ is seen to be
\begin{align*}
\frac{\Psi_{i,j}(0,0)}{(pq\rho)^2} 
\left[\frac{2B}{h_1}\right] \left[\frac{2B}{h_2}\right]
&=M_{i,j}+O\left(\frac{\Psi_{i,j}(0,0)B}{
\min\{h_1,h_2	\}(pq\rho)^2
}\right),
\end{align*}
with 
\begin{equation}\label{eq:Mij}
M_{i,j}
=\frac{4\Psi_{i,j}(0,0)B^2}{h_1h_2(pq\rho)^2}.
\end{equation}
Hence it follows from \eqref{eq:Gamma} that 
\begin{equation}\label{eq:Sij'}
\begin{split}
S_{i,j}=~& M_{i,j}+
O\left(\frac{\Psi_{i,j}(0,0)B}{
\min\{h_1,h_2	\}(pq\rho)^2
}\right)+O(E_{i,j}),
\end{split}
\end{equation}
where
$$
E_{i,j}
=
\sum_{
\substack{-pq\rho/2<m,n\leq pq\rho/2 \\
(m,n)\neq (0,0)} }
\hspace{-0.5cm}
\min\left\{\frac{B}{h_1}, \frac{pq\rho}{|m|}\right\}
\min\left\{\frac{B}{h_2}, \frac{pq\rho}{|n|}\right\}
\frac{|\Psi_{i,j}(m,n)|}{
(pq\rho)^2}.
$$

We now come to the estimation of $\Psi_{i,j}(m,n)$, writing 
 $\rho=uvw^2$, 
with $u,v,w$ as in \eqref{eq:reading}. In particular $\gcd(pq,\rho)=1$.
It will  be convenient to put 
\begin{equation}\label{eq:la-rho}
\lambda(\rho)=u^{1/2}vw^2\gcd(w,h_1).
\end{equation}
Drawing together Lemmas \ref{lem:sigma-final} and \ref{lem:rho}
in Lemma \ref{lem:fact}, we deduce that 
$$
\Psi_{i,j}(m,n)\ll A^{\omega(\rho)} pq \lambda(\rho)   \gcd(pq,m,n)\gcd(u,\Delta(m,n))^{1/2},
$$
if $p\neq q$ and 
$$
\Psi_{i,j}(m,n)
\ll A^{\omega(\rho)} \1_{p\mid (m,n)} p^3 \lambda(\rho)   \gcd(p,m',n')\gcd(u,\Delta(m',n'))^{1/2}, 
$$
if $p=q$, where  
$(m',n')=(m,n)/p$ and 
$$
\1_{p\mid (m,n)}=\begin{cases}
1, & \mbox{if $p\mid (m,n)$,}\\
0, &\mbox{otherwise}.
\end{cases}
$$
If $p=q$ then 
$
\1_{p\mid (m,n)} p^3 \gcd(p,m',n') \leq  pq
\gcd(pq,m,n).
$
Recall that $A^{\omega(n)}=O_{A,\ve}(n^\ve)$ for any $\ve>0$.
In particular we have  $A^{\omega(\rho)}=O_\ve(B^\ve)$ in these estimates, since $\rho\leq 2B$.  
We therefore conclude that 
\begin{equation}\label{eq:pqp}
\Psi_{i,j}(m,n)
\ll_\ve B^\ve 
pq \lambda(\rho)   \gcd(pq,m,n) \gcd(u,\Delta(m,n))^{1/2},
\end{equation}
for any $p,q\in \cP$. In particular, taking $m=n=0$, it follows that 
\begin{equation}\label{eq:pqp-cor}
\Psi_{i,j}(0,0)
\ll_\ve B^\ve 
(pq)^2 \rho  \gcd(w,h_1).
\end{equation}

The following result will be useful when it comes to summing \eqref{eq:pqp} over the relevant $\rho$.

\begin{lemma}\label{lem:sum-rho}
Let $\Delta\in \NN$,  $\ve>0$ and let $\delta\in \{0,1\}$.
We have 
$$
\sum_{\rho\leq 2B/(h_1h_2)} \frac{\lambda(\rho) \gcd(u,\Delta)}{\rho^\delta}
\ll_\ve (\Delta B)^\ve \left(\frac{B}{h_1h_2}\right)^{3/2-\delta},
$$
where $\lambda(\rho)$ is given by \eqref{eq:la-rho} and 
$\rho=uvw^2$,
with $u,v,w$ as in \eqref{eq:reading}.
\end{lemma}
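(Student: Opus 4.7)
The plan is to exploit the multiplicative structure of the decomposition $\rho = uvw^2$ in \eqref{eq:reading} to reduce the problem to three separate, essentially independent, elementary sums. Since $u$ is squarefree and coprime to $vw$, and $v$ is a squarefree divisor of $w$, parameterising $\rho$ by the triple $(u,v,w)$ gives a unique representation, and substituting
\[
\frac{\lambda(\rho)}{\rho^\delta}=\frac{u^{1/2}vw^2\gcd(w,h_1)}{(uvw^2)^\delta}=u^{1/2-\delta}v^{1-\delta}w^{2-2\delta}\gcd(w,h_1)
\]
brings the sum into the shape
\[
\sum_{\substack{u,v,w\\ uvw^2\leq 2B/(h_1h_2)}} u^{1/2-\delta}v^{1-\delta}w^{2-2\delta}\gcd(w,h_1)\gcd(u,\Delta),
\]
with summation constraints as described above. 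Write $Y=2B/(h_1h_2)$.

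Next I would carry out the sum in the order $u$, then $v$, then $w$. For the innermost sum over $u\leq Y/(vw^2)$, the elementary estimate
\[
\sum_{u\leq Z}u^{1/2-\delta}\gcd(u,\Delta)=\sum_{d\mid \Delta}\phi(d)d^{1/2-\delta}\sum_{k\leq Z/d}k^{1/2-\delta}\ll_\varepsilon Z^{3/2-\delta}\Delta^\varepsilon,
\]
valid for both $\delta=0$ and $\delta=1$, takes care of the divisor factor $\gcd(u,\Delta)$ at the cost of $\Delta^\varepsilon$ via the divisor bound $\tau(\Delta)\ll_\varepsilon \Delta^\varepsilon$. For the sum over $v\mid w$ the contribution is at most $\tau(w)\cdot(Y/w^2)^{3/2-\delta}\Delta^\varepsilon$ after minor bookkeeping, since $v^{1-\delta}\cdot v^{-(3/2-\delta)}=v^{-1/2}$ is always summable over divisors of $w$. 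The outermost sum over $w\leq Y^{1/2}$, incorporating the factor $w^{2-2\delta}\gcd(w,h_1)$, reduces after simplification to
\[
Y^{3/2-\delta}\Delta^\varepsilon \sum_{w\leq Y^{1/2}}\frac{\gcd(w,h_1)\tau(w)}{w},
\]
and this final sum is bounded by $\sum_{d\mid h_1}\sum_{w\leq Y^{1/2},d\mid w}\tau(w)/w\ll_\varepsilon \tau(h_1)(\log Y)(h_1Y)^\varepsilon\ll_\varepsilon B^\varepsilon$, absorbing $h_1\leq B$ into $B^\varepsilon$.

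Combining these three estimates gives the claimed bound $(\Delta B)^\varepsilon Y^{3/2-\delta}$, uniformly for $\delta\in\{0,1\}$. The only mild subtlety is keeping track of the exponents $1/2-\delta$, $1-\delta$, and $2-2\delta$ simultaneously so that the geometric series in $v$ (over divisors of $w$) remains convergent in both cases; once one checks $v^{-1/2}$ appears in both instances, the bookkeeping is uniform. I expect no serious obstacle: this is a standard hyperbola-method-type estimation, with the main work being the correct arrangement of the divisor-bound factors so that all $\tau$-losses are absorbed into the factor $(\Delta B)^\varepsilon$.
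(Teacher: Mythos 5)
Your proposal is correct and follows essentially the same route as the paper's proof: both relax the constraints on the decomposition $\rho = uvw^2$ to a free triple sum over $u$, $v\mid w$, $w$, and then evaluate it with elementary divisor-sum estimates, absorbing all $\tau$-factors into $(\Delta B)^\ve$. The only cosmetic difference is that the paper first reduces the case $\delta=0$ to $\delta=1$ using $\rho\leq 2B/(h_1h_2)$ and then computes once, whereas you carry the exponent $\delta$ through the computation uniformly.
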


\begin{proof}
Let $S_\delta$ denote the sum in the lemma, for $\delta\in \{0,1\}$.
It suffices to handle the case  $\delta=1$,
since $S_0\ll (h_1h_2)^{-1}BS_1$. 
We will make use of the fact that 
$$
\sum_{n\leq N}\gcd(n,\Delta)= \sum_{d\mid \Delta} d \#\{n\leq N: d\mid n\}
\ll \tau(\Delta)N,
$$
where $\tau$ is the divisor function, together with  results that follow from it using partial summation. 
Recalling 
\eqref{eq:la-rho}, we 
note that 
$$
\frac{\lambda(\rho)}{\rho}=
\frac{\gcd(w,h_1)}{u_1^{1/2}}.
$$
Hence we have 
\begin{align*}
S_1&\leq
 \sum_{w\leq 2B/(h_1h_2)} 
\gcd(w,h_1)
\sum_{v\mid w}
\sum_{u\leq 2B/(h_1h_2v w^2)}
\frac{\gcd(u,\Delta)^{1/2}}{u_1^{1/2}}\\
&\ll\tau(\Delta)
\left(\frac{B}{h_1h_2}\right)^{1/2}
 \sum_{w\leq 2B/(h_1h_2)} 
\frac{\gcd(w,h_1)}{w}
\sum_{v\mid w}
\frac{1}{v^{1/2}}\\
&\ll_\ve \frac{\tau(\Delta) B^{1/2+\ve}}{(h_1h_2)^{1/2}}.
\end{align*}
We  complete the proof of the lemma by taking $\tau(\Delta)=O_\ve(\Delta^\ve)$.
\end{proof}

We now turn to an upper bound for 
$N_{1}(B,C;H)$, following \eqref{eq:N1-1} and \eqref{eq:N1-2}.
We start by analysing the main term $M_{i,j} $ in \eqref{eq:Mij}.
Suppose that $p\neq q$.
Then it follows from Lemmas~\ref{lem:fact} and \ref{lem:sigma-final} that 
\begin{align*}
M_{i,j}
&=
\frac{4\Sigma_i(p;0,0)\Sigma_j(q;0,0)\Phi(\rho;0,0)B^2}{h_1h_2(pq\rho)^2} \\
&=
\frac{4\max\{1,i\}\max\{1,j\}(1+O(p^{-1}+q^{-1}))\Phi(\rho;0,0)B^2}{h_1h_2\rho^2}\\
&=
\frac{4\max\{1,i\}\max\{1,j\}\Phi(\rho;0,0)B^2}{h_1h_2\rho^2} +O\left(
\frac{\Phi(\rho;0,0)B^2}{h_1h_2 \min\{p,q\}\rho^2}
 \right)
\end{align*}
for $i,j\in \{0,1,2\}$.
Recalling \eqref{eq:c-3}, we 
deduce that 
$$
\sum_{i,j\in \{0,1,2\}} 
\hspace{-0.3cm}
c_{i,j}(\alpha) M_{i,j}
=
\frac{4\Phi(\rho;0,0)B^2}{h_1h_2\rho^2}\left(\alpha-1\right)^2
 +O\left(
\frac{\Phi(\rho;0,0)B^2}{h_1h_2 \min\{p,q\}\rho^2}
 \right),
 $$
 if $p\neq q$.
Taking $\alpha=1$ therefore eliminates the main term in this expression.
 Suppose next that   $p=q\in \cP$. Then, returning to  \eqref{eq:Mij},  
we deduce from Lemma \ref{lem:fact}  that 
\begin{align*}
M_{i,j}
=
\frac{4\Sigma_{i+j}(p;0,0)\Phi(\rho;0,0)B^2}{h_1h_2p^2\rho^2} 
&\ll 
\frac{\Phi(\rho;0,0)B^2}{h_1h_2\rho^2},
\end{align*}
for $i,j\in \{0,1,2\}$.
It now follows that 
\begin{align*}
\sum_{h_1,h_2\leq H} \frac{\log^2 Q}{Q^2} &
\sum_{\rho\leq 2B/(h_1h_2)}\sum_{p,q\in \cP}
\left|
\sum_{i,j\in \{0,1,2\}} c_{i,j}(1) M_{i,j}
\right| \\
&\ll B^2\log^2 H
\frac{\log^2 Q}{Q^2} 
\sum_{\rho\leq 2B}
\frac{\Phi(\rho;0,0)}{\rho^2}\Upsilon,
\end{align*}
where
$
\Upsilon=
\sum_{p\neq q\in \cP}
\min\{p,q\}^{-1}+ \sum_{p\in \cP}
1\ll Q.
$
Next, 
Lemma \ref{lem:rho} implies that  $\Phi(\rho;0,0)\ll_\ve \rho^{1+\ve}\gcd(w,h_1)$,
whence 
$$
\sum_{\rho\leq 2B}
\frac{\Phi(\rho;0,0)}{\rho^2}
\ll_\ve B^\ve
\sum_{\rho\leq 2B} \frac{\gcd(w,h_2)}{\rho}.
$$
But
\begin{equation}\label{eq:fun}
\begin{split}
\sum_{\rho\leq 2B}
\frac{\gcd(w,h_1)}{\rho}
&\ll 
\sum_{w\leq 2B} 
\frac{\gcd(w,h_1)}{w^2}
\sum_{v\mid w}\frac{1}{v}
\sum_{u\leq 2B/(v w^2)}\frac{1}{u}\\
&\ll B^{2\ve},
\end{split}
\end{equation}
on recalling the decomposition $\rho=uvw^2$ from \eqref{eq:reading} and employing the bound 
$\sum_{1\leq n\leq N} 1/n\ll \log N\ll_\ve N^\ve$.
We conclude that 
\begin{equation}\label{eq:nye}
\sum_{h_1,h_2\leq H} \frac{\log^2 Q}{Q^2} \sum_{p,q\in \cP}
\sum_{\rho\leq 2B/(h_1h_2)}
\left|
\sum_{i,j\in \{0,1,2\}} c_{i,j}(1) M_{i,j}
\right| \ll_\ve \frac{B^{2+\ve}}{Q},
\end{equation}
on redefining the choice of $\ve$.

We now turn to the error terms in \eqref{eq:Sij'}. 
Firstly, it follows from \eqref{eq:pqp-cor} that 
$$
\frac{\Psi_{i,j}(0,0)B}{
\min\{h_1,h_2	\}(pq\rho)^2
}\ll_\ve
\frac{\gcd(w,h_1)B^{1+\ve}}{
\min\{h_1,h_2	\} \rho}.
$$
Next, 
we note from \eqref{eq:Delta} that $\Delta(m,n)=0$ if and only if $m=n=0.$ 
In view of \eqref{eq:pqp} we see that the contribution to the sum $E_{i,j}$ in \eqref{eq:Sij'} from $m=0$,
in which case $\Delta(0,n)=h_1^2n^2$, 
is 
\begin{align*}
&\leq \frac{B}{h_1}\sum_{
\substack{-pq\rho/2<n\leq pq\rho/2 \\
n\neq 0} }
\frac{pq\rho}{|n|}
\frac{|\Psi_{i,j}(0,n)|}{
(pq\rho)^2} \\
&\ll_\ve \frac{B^{1+\ve}}{h_1}
\frac{\lambda(\rho)}{\rho} 
\sum_{
\substack{-pq\rho/2<n\leq pq\rho/2 \\
n\neq 0} }
\frac{\gcd(pq,n)\gcd(u,h_1^2n^2)^{1/2}}{|n|}.
\end{align*}
Similarly, the contribution from  $n=0$ can be bounded by the same quantity, in which $h_1$ is replaced  by $h_2$.
We therefore conclude that terms with $mn=0$ and $(m,n)\neq (0,0)$ give an overall contribution
$$
\ll_\ve 
 \frac{\rho^{-1}\lambda(\rho) B^{1+\ve}}{\min\{h_1,h_2\}}
 \sum_{\substack{-pq\rho/2<k\leq pq\rho/2 \\
k\neq 0}} 
\frac{ \gcd(u,h_1h_2k)}{|k|}
$$
in \eqref{eq:Sij'}.

Next we consider the contribution to $E_{i,j}$  in \eqref{eq:Sij'} from $mn\neq 0$. 
Applying \eqref{eq:pqp} we see that this is
\begin{align*}
&\leq \sum_{
\substack{-pq\rho/2<m,n\leq pq\rho/2 \\
mn\neq 0} }
\frac{|\Psi_{i,j}(m,n)|}{|mn|}\\
&\ll_\ve B^\ve 
 pq \lambda(\rho) 
\sum_{
\substack{-pq\rho/2<m,n\leq pq\rho/2 \\
mn\neq 0} }
\frac{ \gcd(pq,m,n) \gcd(u,\Delta(m,n))^{1/2}}{|mn|}\\
&\ll_\ve B^\ve 
 pq \lambda(\rho) 
\sum_{
\substack{-pq\rho/2<m,n\leq pq\rho/2 \\
mn\neq 0} }
\frac{ \gcd(u,\Delta(m, n))
}{|mn|},
\end{align*}
since $\gcd(pq,u)=1$.

Combining this with our  estimates so far we conclude that 
\begin{align*}
|S_{i,j}-M_{i,j}|\ll_\ve~&
\frac{\gcd(w,h_1)B^{1+\ve}}{
\min\{h_1,h_2	\} \rho}\\
&+
\frac{  
\rho^{-1} \lambda(\rho)B^{1+\ve} }{\min\{h_1,h_2\}}
 \sum_{\substack{-pq\rho/2<k\leq pq\rho/2 \\
k\neq 0}} 
\frac{ \gcd(u,h_1h_2k)}{|k|}\\
&+pq \lambda(\rho)B^{\ve}
\sum_{
\substack{-pq\rho/2<m,n\leq pq\rho/2 \\
mn\neq 0} }
\frac{ \gcd(u,\Delta( m,n))}{|mn|}.
\end{align*}
We would now like to introduce a summation over $\rho\leq 2B/(h_1h_2)$.
For the first term we use \eqref{eq:fun}.
For the remaining two terms  we apply Lemma \ref{lem:sum-rho}.
This leads to the conclusion that 
\begin{align*}
\sum_{\rho\leq 2B/(h_1h_2)}
|S_{i,j}-M_{i,j}|
\ll_\ve~&
B^{5\ve}
\Big\{
\frac{B}{
\min\{h_1,h_2	\}}
+
 \frac{B^{3/2}}{\min\{h_1,h_2\}(h_1h_2)^{1/2}}\\
&+ \frac{(pq)^{1+\ve} B^{3/2}}{(h_1h_2)^{3/2}}
\Big\}.
\end{align*}
Now 
$$
\sum_{h_1,h_2\leq H} \frac{1}{\min\{h_1,h_2\}} \ll H \log H
$$
and 
$$
\sum_{h_1,h_2\leq H} \frac{1}{\min\{h_1,h_2\}(h_1h_2)^{1/2}} \ll H^{1/2}, \quad
\sum_{h_1,h_2\leq H}
 \frac{1}{(h_1h_2)^{3/2}} \ll 1.
$$
Using these estimates it follows that 
\begin{align*}
\sum_{h_1,h_2\leq H} \frac{\log^2 Q}{Q^2} \sum_{p, q\in \cP}
&\sum_{i,j\in \{0,1,2\}}
\sum_{\rho\leq 2B/(h_1h_2)}	
\left|
S_{i,j}-M_{i,j}\right| \\
&\ll_\ve B^{6\ve}\left\{HB+(H^{1/2}+
Q^2)B^{3/2}\right\}.
\end{align*}
By assumption $H\leq 2B$. Hence $HB\ll H^{1/2}B^{3/2}$.
Returning to \eqref{eq:N1-1} and \eqref{eq:N1-2}, and 
recalling \eqref{eq:nye}, 
 we now conclude that 
$$
N_{1}(B;H)
\ll_\ve
\frac{B^{2+\ve}}{Q}+
(H^{1/2}+
Q^2)B^{3/2+6\ve}.
$$
Taking 
$Q=B^{1/6}$, we  conclude the proof of Lemma \ref{lem:small-h} on redefining the choice of $\ve$.

\end{document}